\newtheorem{theorem}{Theorem}[section]
\newtheorem{lemma}[theorem]{Lemma}
\newtheorem{note}[theorem]{Note}
\newtheorem{prop}[theorem]{Proposition}
\newtheorem{cor}[theorem]{Corollary}
\newtheorem*{Theorem1'}{Theorem 1'}
\theoremstyle{definition}
\theoremstyle{remark}
\numberwithin{equation}{section}
\newcommand \N{{\widetilde{N}}}
\newcommand \C{{\mathbb C}}
\renewcommand \O{{\mathcal O}}
\newcommand \ind{{\mathrm {ind}}}
\newcommand \GL{{\mathrm {GL}}}
\newcommand \Sp{{\mathrm {Sp}}}
\newcommand \U{{\mathcal {U}}}
\renewcommand \min{{\mathfrak {n}}}
\newcommand \m{{\mathfrak {m}}}
\renewcommand \i{{\mathfrak {i}}}
\renewcommand \j{{\mathfrak {j}}}
\renewcommand \k{{\mathfrak {k}}}
\renewcommand \r{{\mathfrak {r}}}
\renewcommand \P{{\mathcal P}}
\newcommand \al{{\alpha}}
\begin{document}

\title[Clifford theory of Weil representations]{Clifford theory of Weil\\ representations of unitary groups}

\author{Moumita Shau}
\address{Department of Mathematics and Statistics, University of Regina, Regina, Canada, S4S 0A2}
\email{tuktukishau@gmail.com}

\author{Fernando Szechtman}
\address{Department of Mathematics and Statistics, University of Regina, Regina, Canada, S4S 0A2}
\email{fernando.szechtman@gmail.com}
\thanks{The second author was supported in part by an NSERC discovery grant}

\subjclass[2010]{Primary: 20G05 Secondary: 20C15}

\keywords{Clifford theory, Weil representation, unitary group}

\begin{abstract} Let $\O$ be an involutive discrete valuation ring with residue field of characteristic not 2. Let
$A$ be a quotient of $\O$ by a nonzero power of its maximal ideal and let $*$ be the involution that $A$ inherits from $\O$.
We consider various unitary groups $\U_m(A)$ of rank $m$ over $A$,
depending on the nature of $*$ and the equivalence type of the underlying hermitian or skew hermitian form. Each group $\U_m(A)$ gives rise to a Weil representation.

In this paper we give a Clifford theory description of all irreducible components of the Weil
representation of $\U_m(A)$ with respect to all of its abelian congruence subgroups and a third of its nonabelian congruence subgroups.
\end{abstract}

\maketitle

\section{Introduction}

Since their inception \cite{W}, Weil representations have been studied by several authors. One line of investigation was concerned with symplectic
and unitary groups over finite fields \cite{Ge, Go, H, T, TZ}. The Weil representations of these groups have few irreducible constituents. This feature, however, changes if one replaces fields by rings. The decomposition problem of the Weil representation of symplectic and various unitary groups over suitable finite rings was studied in \cite{CMS1,S, HS, HSS}.

In this paper we give a Clifford theory description of all irreducible components of the Weil
representation of a unitary group $\U_m(A)$ with respect to all of its abelian congruence subgroups and about a third of its nonabelian congruence subgroups.
As indicated below, $\U_m(A)$ is a typical member of five distinct families of groups. Our treatment
is uniform, although each family often requires special attention. We have two general theorems valid in all cases. We require two theorems, depending on the nature of the congruence subgroup: abelian or nonabelian.
In order to state such theorems, additional notation and background will first be established.

Let $\O$ be discrete valuation ring with involution -possibly trivial- having residue field of characteristic not 2 and let
$A$ be a quotient of $\O$ by a nonzero power of its maximal ideal. Then $A$ inherits an involution, say $*$, from $\O$
and we let $R$ stand for the fixed ring of $*$. Three cases arise (see \cite[Proposition 5]{CQS}):

$\bullet$ symplectic: $*$ is trivial, that is, $A=R$.

$\bullet$ unramified: $A=R\oplus \theta R$, where $\theta$ is a unit of $A$ and $\theta^*=-\theta$.

$\bullet$ ramified: $A=R\oplus \pi R$, where $A\pi$ is the maximal ideal of $A$ and $\pi^*=-\pi$.


The ramified case further divides into two cases, odd or even, depending on whether the nilpotency degree of $\pi$ is odd, $2\ell-1$, or even,~$2\ell$.

In all cases, $A$ and $R$ are finite, commutative, principal, local rings of
odd characteristic. Let $\r=A\pi$ and $\m=Rp$ stand for their
maximal ideals, so that $\m=R\cap\r$, and let $F_q=R/\m$ be the
residue field of $R$. Then $A/\r\cong F_q$ in the symplectic and
ramified cases, and $A/\r\cong F_{q^2}$ in the unramified case. We
choose $\pi$ and $p$ so that $\pi=p$ in the symplectic and
unramified cases, and $\pi^2=p$ in the ramified case. The minimal
ideal of $A$ will be denoted by $\min$.

We have $A=R\oplus S$, where $S$ is the additive group of all skew hermitian elements of $A$. In the unramified case, $S=R\theta$
and $\{1,\theta\}$ is an $R$-basis of $A$. In the ramified case, $S=R\pi$, but $\{1,\pi\}$ is an $R$-basis of $A$ in the even case only.
In the ramified odd case the annihilator of $\pi$ in $R$ is $Rp^{\ell-1}$. This is true even in the extreme case when $\ell=1$,
which is the symplectic field case $A=R=F_q$.

Let $V$ be a free $A$-module of finite rank $m\geq 1$. If $*$ is unramified, all nondegenerate skew hermitian (resp. hermitian)
forms defined on $V$ are equivalent \cite[\S 3]{CHQS} and we can pass from one type of form to the other one through multiplication by $\theta$;
both forms yield the same unitary group. If $*$ is ramified (resp. symplectic), all nondegenerate skew hermitian forms defined on $V$ are equivalent and $m=2n$ must be even \cite[\S 2]{CS},
whereas there are two types of nondegenerate hermitian forms defined on $V$ \cite[\S 3]{CHQS}.
If $m\geq 2$ all forms above are isotropic, that is, they admit an isotropic basis vector, except when $m=2$, $*$ is ramified and the form
is hermitian of one of the two types indicated above \cite[\S 3]{CHQS}.

We consider an $R$-linear map $d:A\to R$, namely the projection of $A$ onto $R$ in the symplectic, unramified, and ramified odd cases, and $d(r+s\pi)=s$ in the ramified even case. Let $h:V\times V\to A$ be a nondegenerate hermitian or skew hermitian form. More precisely, we let $h$ be skew hermitian in the symplectic, unramified, and ramified odd cases,
whereas $h$ is hermitian in the ramified even case. We next define an alternating
$R$-bilinear form $f:V\times V\to R$, where $V$ is viewed here as an $R$-module, by $f=d\circ h$. Our definition of $d$ ensures that $f$
is nondegenerate (see Corollary \ref{nondeg}).
Let $\U_m(A)=\U(V)$ stand for the unitary group associated to $h$ and let $\Sp(V)$ be the symplectic group associated to $f$.
By definition, $\U(V)$ is a subgroup of $\Sp(V)$.

It is worth noting that the isomorphism types of the various unitary groups above are quite sensitive to changes of the given parameters. Indeed,
suppose first $m$ is even. If $*$ is symplectic, then $\U(V)=\Sp(V)$ is an extension of the symplectic group $\Sp_m(F_q)$.
If $*$ is unramified then $\U(V)$ is an extension of the unitary group ${\mathrm U}_m(F_{q^2})$. If $*$ is ramified odd, then $\U(V)$
is an extension of the symplectic group $\Sp_m(F_q)$. If $*$ is ramified even, then $\U(V)$ is extension of an orthogonal group ${\mathrm O}_m(F_q)$,
which depends on the type of the underlying hermitian form. Formulas for the orders of all of these groups $\U(V)$ based on the orders of $\Sp_m(F_q)$,
${\mathrm U}_m(F_{q^2})$ and
${\mathrm O}_m(F_q)$ (as given in \cite{G}) can be found in \cite[\S 5]{CS} and \cite[\S 5]{CHQS}, which allows us to see all five unitary groups $\U(V)$ listed above have different orders. Suppose next $m$ is odd. The symplectic and ramified odd cases cease to exist and in the ramified even case the two types of hermitian forms are linked by a unit from $R^\times\setminus R^{\times 2}$, thus yielding the same unitary group. Using \cite[\S 5]{CHQS} and \cite{G} we see that
the remaining two unitary groups $\U(V)$ have different orders.




The Heisenberg group $H(V)$ has underlying set $R\times V$, with multiplication
$$
(r,u)(s,v)=(r+s+f(u,v),u+v).
$$
We identify the center $Z(H(V))=(R,0)$ of $H(V)$ with $R^+$. Note that $\Sp(V)$ acts on $H(V)$ by means of automorphisms via
$$
{}^g (r,u)=(r,gu).
$$

We fix a linear character $\lambda:R^+\to\C^*$ that is primitive, in the sense that its kernel contains no ideals of $R$ but $(0)$.
It gives rise to a linear character $\mu:A^+\to\C^*$, given by
$$
\mu(a)=\lambda(d(a)),\quad a\in A.
$$

Now $H(V)$ has a unique representation $\mathfrak S:H(V)\to\GL(X)$, up to isomorphism, lying over $\lambda$ \cite[\S 2]{CMS1}. It is called Schr${\rm\ddot{o}}$dinger representation of type $\lambda$ and its degree is equal to $\sqrt{|V|}$. It is clear that $\mathfrak S$ is $\Sp(V)$-invariant. In fact \cite[\S 3]{CMS1}, there is a representation $W:\Sp(V)\to\GL(X)$, called Weil representation of type $\lambda$, such that
$$
W(g)\mathfrak S(k)W(g)^{-1}=\mathfrak S({}^g k),\quad g\in\Sp(V), k\in H(V).
$$

We are concerned with $X$ as a $\U(V)$-module. Consider the $\U(V)$-submodule $Bot$ of $X$ formed
by all fixed points of $(0,\min V)$ in $X$, and set
$$
Top=X/Bot.
$$
We know from \cite{CMS1, S, HS, HSS} that $Bot$ is a Weil module for a unitary group $\U_{m}(\overline{A})$, where
$\overline{A}$ is a quotient of~$A$, so we may restrict attention to $Top$.


Given an ideal $\k$ of $A$, the congruence subgroup $\Omega(\k)$ is defined by
$$
\Omega(\k)=\{g\in \U(V)\,|\, gv\equiv v\mod \k V\}.
$$
This is a normal subgroup of $\U(V)$, namely the kernel of the
reduction homomorphism $\U_{m}(A)\to \U_{m}(A/\k)$. That this is
an epimorphism can be found in \cite[Theorem 5.2]{CHQS} in the unramified and ramified even cases, and in \cite[Theorem 4.1]{CS} as well as
in \cite[Theorem 10.1]{CS} in the symplectic and ramified odd cases. It
is easy to see that $\Omega(\k)$ is abelian if and only if
$\k^2=(0)$.

Our main goal is to describe each irreducible component of $Top$, say~$P$, via Clifford theory with respect to all congruence subgroups $\Omega(\k)$ of~$\U(V)$ such that $\k^3=(0)$.
We aim to determine an irreducible $\Omega(\k)$-submodule $M$
of $P$; the stabilizer $G$ of $M$ in $\U(V)$; the $M$-homogeneous component $C$ in $P$, which
is an irreducible $G$-module; the dimension of~$C$; and the actual action of $G$ on $C$. We achieve these goals, thereby giving
a Clifford theory description of the form $P=\ind_{G}^{\U(V)} C$ of each irreducible component $P$ of $Top$ with respect to every congruence
subgroup $\Omega(\k)$ of $\U(V)$ such that $\k^3=(0)$.

A similar result was achieved in \cite{CMS2} for all irreducible components of the Weil module of $\Sp(V)$
with respect to every congruence subgroup. In the ramified case of even type, a Clifford theory for the irreducible constituents of $Top$ with respect to the single congruence
subgroup $\Omega(\r^\ell)$ was developed in \cite{HS}.

When $\k=(0)$, $\k=A$ or when $\U(V)$ is abelian, Clifford theory is trivial. For this reason,
we will assume from now on that $A$ is not a field and $m>1$.

We fix throughout a nonzero ideal $\i$ of $A$ with square $(0)$ and annihilator $\j$. Clearly $\i\subseteq \j$. We also fix
the $A$-submodule $U=\i V$ of $V$. It follows from Lemmas \ref{perp} and \ref{ij} that
$U^\perp=\j V$, with respect to both $h$ and
$f$.  In particular, $U\subseteq U^\perp$, that is, $U$ is totally isotropic. Observe as well that $U$ and $U^\perp$ are $\U(V)$-invariant.

Since $f$ induces a nondegenerate form on $U^\perp/U$, there is a unique -up to isomorphism- irreducible representation $S':H(U^\perp)\to\GL(Z)$
where $(R,U)$ acts via $\lambda$. We refer to $S'$ as a Schr${\rm\ddot{o}}$dinger representation of type $\lambda$. The nondegenerate case again yields a corresponding Weil representation $\Sp(U^\perp)\to\GL(Z)$ of type~$\lambda$ \cite[\S 3]{CMS2}, and we let
$W':\U(U^\perp)\to\GL(Z)$ stand for its restriction to $\U(U^\perp)$. Thus $Z$ is a $H(U^\perp)\rtimes \U(U^\perp)$-module, and therefore a $H(U^\perp)\rtimes \U(V)$-module via the restriction map $\U(V)\to \U(U^\perp)$, given by $g\mapsto g|_{U^\perp}$.

Consider the induced $H(V)\rtimes \U(V)$-module
$$
X=\ind_{H(U^\perp)\rtimes \U(V)}^{H(V)\rtimes \U(V)} Z.
$$
Then the restriction of $X$ to $H(V)$ is isomorphic to $\ind_{H(U^\perp)}^{H(V)} Z$,
which is a Schr${\rm\ddot{o}}$dinger module of $H(V)$ of type $\lambda$ by \cite[Proposition 3.3]{CMS2}. It follows that the restriction of $X$ to $\U(V)$ is a Weil module of type $\lambda$. 

In order to study the Weil module $X$ for $\U(V)$, we use, as in \cite[\S 4]{CMS2}, Mackey subgroup theorem, this time with respect to
the subgroups $H$ and $K$ of $H(V)\rtimes \U(V)$ given by $$K=\U(V), \quad H=H(U^\perp)\rtimes \U(V).$$

Arguing as in \cite[\S 4]{CMS2} we see that if $T$ is a system of representatives for the $\U(V)$ orbits of $V/U^\perp$ and
$$
G_t=\{g\in\U(V)\,|\, gt\equiv t\mod U^\perp\},
$$
then the $\U(V)$-module $X$ admits the following decomposition
$$
X\cong \underset{t\in T}\bigoplus \ind_{G_t}^{\U(V)} Z_t,
$$
where $Z_t=Z$ as vector spaces, and $Z_t$ affords the representation, say $W_t:G_t\to\GL(Z_t)$, given by
\begin{equation}
\label{formu}
W_t(g)=S'(f(gt,t),gt-t)W'(g|_{U^\perp}),\quad g\in G_t.
\end{equation}

Since $\j\i=(0)$, the congruence subgroup $\Omega(\i)$ acts trivially on $\j V$. As $S'(0,u)=1_Z$ for all $u\in U$, (\ref{formu})
implies that given any $t\in T$, the congruence subgroup $\Omega(\i)$ acts on $Z_t$ via $W_t$ through the function $\alpha_t:\Omega(\i)\to\C^*$
defined by
\begin{equation}
\label{formu2}
\alpha_t(g)=\lambda(f(gt,t))=\mu(h(gt,t)),\quad g\in \Omega(\i).
\end{equation}
As $W_t$ is a group homomorphism, so must be $\alpha_t$. In fact, it is easy to verify directly that (\ref{formu2}) defines a linear character of $\Omega(\i)$ for all $t\in V$.

We have a group homomorphism, called the norm map, $A^\times\to R^\times$, given
by $a\mapsto aa^*$. Its kernel will be denoted by $N$. Thus
$$
N=\{a\in A^\times\,|\, aa^*=1\}.
$$
We view $N$ as a central subgroup of $\U(V)$ via $a\mapsto a\cdot
1_V$. It is easy to see that $N$ is in fact the center of $\U(V)$. Let $\widehat{N}$ be the group of all linear characters $N\to\C^*$. Given $\phi\in \widehat{N}$, let $Top(\phi)$ be the corresponding $N$-eigenspace of $Top$. It is a $\U(V)$-submodule of $X$.

A vector $v\in V$ is said to be primitive if $v$ belongs to an $A$-basis of~$V$. The set of all primitive vectors of $V$ will be denoted by $\P$.
Thus, $\P=V\setminus \r V$.

Let $\N$ be the set of all $\phi\in\widehat{N}$ that extend $\alpha_t|_{N\cap (1+\i)}$
for some $t\in T\cap\P$ (this $t$ is necessarily unique by Lemma \ref{imp}).

With this notation, we have the following results.

\begin{theorem}\label{mainabelian} (Abelian Clifford Theory)

(a) We have $\N=\widehat{N}$, except when $m=2$ with $h$ hermitian and non isotropic, in which case
$\N$ is a proper subset of $\widehat{N}$ of cardinality $2(q^\ell-q^{\ell-1})$.

(b) The top layer of the Weil module admits the following multiplicity free decomposition into
irreducible $\U(V)$-modules:
$$
Top\cong \underset{\phi\in\N}\bigoplus Top(\phi).
$$

(c) The top layer of the Weil module admits the following Mackey decomposition into $\U(V)$-submodules:
$$
Top\cong \underset{t\in T\cap\P}\bigoplus \ind_{G_t}^{\U(V)} Z_t,
$$
where $Z_t$ affords the representation (\ref{formu}) and $\dim Z_t=\sqrt{|U^\perp/U|}$.

(d) Let $t\in T\cap\P$. Then the $\U(V)$-module $\ind_{G_t}^{\U(V)} Z_t$ admits the following multiplicity free decomposition into
irreducible components:
$$
\ind_{G_t}^{\U(V)} Z_t\cong \underset{\phi\in E_t}\bigoplus Top(\phi),
$$
where $E_t$ consists of all $|N/N\cap(1+\i)|$ linear characters $\phi:N\to\C^*$ that extend $\alpha_t|_{N\cap (1+\i)}$.

(e) Let $\phi\in \N$. Then there is one and only one $t\in T\cap\P$ such that the linear character
$\alpha_t$ of $\Omega(\i)$ enters $Top(\phi)$. Moreover, the
stabilizer of $\alpha_t$ in $\U(V)$ is $G_t N$, and
$$
Top(\phi)\cong \ind_{G_t N}^{\U(V)} Z_t(\phi),
$$
where $Z_t(\phi)$ is the eigenspace of $Z_t$ for the subgroup $N\cap G_t=N\cap (1+\j)$ of $G_t$ corresponding to the linear character $\phi|_{N\cap (1+\j)}$
(that is, $Z_t(\phi)=e_\phi Z_t$, where $e_\phi$ is the idempotent of $N\cap (1+\j)$ associated to $\phi|_{N\cap (1+\j)}$).
Moreover, $Z_t(\phi)$ is an irreducible $G_t N$-module of dimension
\begin{equation}
\label{dimzeta}
\dim Z_t(\phi)=\frac{\sqrt{|U^\perp/U|}}{|N\cap (1+\j)/N\cap (1+\i)|}.
\end{equation}
\end{theorem}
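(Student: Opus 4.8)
The plan is to dispose of the five parts in a logical order that exploits the Mackey decomposition $X\cong\bigoplus_{t\in T}\ind_{G_t}^{\U(V)}Z_t$ already established, passing to $Top$ by discarding the summands coming from imprimitive $t$. First I would prove part (c): the submodule $Bot$ consists of the fixed points of $(0,\min V)$, and one checks — using the explicit action (\ref{formu}) and the fact that $\min V\subseteq U^\perp$ when $t$ is imprimitive but not when $t$ is primitive — that the summands $\ind_{G_t}^{\U(V)}Z_t$ with $t\in T\setminus\P$ are precisely those landing in $Bot$, so that $Top\cong\bigoplus_{t\in T\cap\P}\ind_{G_t}^{\U(V)}Z_t$; the dimension count $\dim Z_t=\sqrt{|U^\perp/U|}$ is immediate from the construction of $Z$ as a Schrödinger module for $H(U^\perp)$. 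Next I would handle part (d), which is local to a single primitive orbit: by Clifford theory with respect to the central subgroup $N$ acting on $\ind_{G_t}^{\U(V)}Z_t$, the module decomposes along $\widehat{N}$-eigenspaces, and one must show that the characters $\phi$ occurring are exactly the $|N/N\cap(1+\i)|$ extensions of $\alpha_t|_{N\cap(1+\i)}$, each with multiplicity one. The key computation is that $N\cap(1+\i)$ acts on $Z_t$ through the scalar $\alpha_t$ (a direct consequence of (\ref{formu2})), and that $Z_t$, being an irreducible Schrödinger-type module, is multiplicity free over the abelian group $N$; Frobenius reciprocity then converts the $\widehat{N}$-content of $Z_t$ into the $\widehat{N}$-content of the induced module.

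With (c) and (d) in hand, part (a) becomes a counting statement: $\N$ is by definition the union over $t\in T\cap\P$ of the sets $E_t$, so I would show these sets $E_t$ cover all of $\widehat{N}$ generically by computing $|T\cap\P|$, $|E_t|$, and the total $|\widehat{N}|=|N|$, and checking the numbers match using the known order formulas for $\U(V)$ and the transitivity of $\U(V)$ on primitive vectors of a given "value" $h(t,t)$. The exceptional case $m=2$ with $h$ hermitian anisotropic is where $\U(V)$ has too few primitive orbits — indeed there are no isotropic primitive vectors — and here I would compute directly that the values $h(t,t)$ realized by primitive $t$ form a proper subset of the relevant coset space, yielding the stated cardinality $2(q^\ell-q^{\ell-1})$. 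Part (b) follows by assembling (c), (d) and (a): summing the decompositions in (d) over $t\in T\cap\P$ gives $Top\cong\bigoplus_{\phi\in\N}m_\phi\, Top(\phi)$, and the multiplicity $m_\phi$ is $1$ precisely because — by Lemma \ref{imp}, invoked in the definition of $\N$ — each $\phi\in\N$ extends $\alpha_t|_{N\cap(1+\i)}$ for a \emph{unique} $t\in T\cap\P$, so $\phi$ lies in exactly one $E_t$; irreducibility of each $Top(\phi)$ is then inherited from part (e).

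Finally, part (e) is the heart of the matter and where I expect the main obstacle. Fix $\phi\in\N$ and the unique $t\in T\cap\P$ with $\alpha_t|_{N\cap(1+\i)}=\phi|_{N\cap(1+\i)}$. The linear character $\alpha_t$ of $\Omega(\i)$ enters $Top(\phi)$ by construction; I would first identify its stabilizer in $\U(V)$. Since $\alpha_t(g)=\mu(h(gt,t))$ depends only on the class of $t$ modulo $U^\perp$ up to the $N$-twist, a computation with the hermitian form shows the stabilizer is exactly $G_tN$ — the inclusion $G_tN\subseteq\mathrm{Stab}$ is clear, and the reverse requires showing any element fixing $\alpha_t$ moves $t$ within $Nt+U^\perp$, which is where one uses nondegeneracy of $h$ and the hypothesis $\i^2=(0)$ so that $\Omega(\i)$ is abelian and its characters are genuinely linear. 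Then Clifford theory gives $Top(\phi)\cong\ind_{G_tN}^{\U(V)}C$ for $C$ the $\alpha_t$-homogeneous component; identifying $C$ with $Z_t(\phi)=e_\phi Z_t$ uses that $N\cap G_t=N\cap(1+\j)$ (because $G_t\subseteq\Omega(\i)^\perp$-type conditions force the relevant congruence) together with the fact that $Z_t$ restricted to this abelian group is multiplicity free, so $e_\phi Z_t$ is irreducible as a module for $G_tN$ — the irreducibility here is the subtle point, requiring that $G_t$ acts transitively enough on the relevant eigenspace, which I would extract from the Weil representation theory of $\U(U^\perp)$ as in \cite{CMS2}. The dimension formula (\ref{dimzeta}) is then pure bookkeeping: $\dim Z_t=\sqrt{|U^\perp/U|}$ splits evenly among the $|N\cap(1+\j)/N\cap(1+\i)|$ characters of $N\cap(1+\j)$ extending $\alpha_t|_{N\cap(1+\i)}$.
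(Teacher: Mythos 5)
Your outline for (a), (c), and the stabilizer computation in (e) matches the paper's route (Lemma \ref{bot} for (c), Lemma \ref{lincar} and Corollary \ref{co} for (a), Propositions \ref{alv=alw} and \ref{inertia} for the stabilizer). But there is a genuine circularity at the core of your argument. You propose to prove (d) first, asserting that each $Top(\phi)$ occurs with multiplicity one in $\ind_{G_t}^{\U(V)}Z_t$ because ``$Z_t$ is multiplicity free over the abelian group $N$'' plus Frobenius reciprocity; you then claim (b) follows by summing over $t$, with ``irreducibility of each $Top(\phi)$ inherited from part (e).'' This cannot work as stated: $N$ is not contained in $G_t$ (only $N\cap(1+\j)=N\cap G_t$ acts on $Z_t$), and knowing the $N$-isotypic content of $\ind_{G_t}^{\U(V)}Z_t$ says nothing about whether each $N$-eigenspace is $\U(V)$-irreducible or occurs without multiplicity as a $\U(V)$-module. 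In the paper, part (b) --- the statement that $Top$ is multiplicity free with irreducible $N$-eigenspaces $Top(\phi)$ --- is not derived from the other parts at all: it is imported wholesale from \cite{CMS1}, \cite{S}, \cite{HS} and \cite{HSS}, and it is the essential input that makes the counting in (d) and (e) go through (the constituents of $\ind_{G_t}^{\U(V)}Z_t$ are forced to be exactly the $Top(\phi)$ with $\phi\in E_t$ because the disjoint sets $E_t$ have total cardinality $|\N|$, which by (b) is the total number of irreducible constituents of $Top$). Without (b), or an independent proof of it, your deductions of ``multiplicity one'' in (d) and of the irreducibility of $Z_t(\phi)$ as a $G_tN$-module in (e) have no foundation.

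A second, smaller gap: you dismiss the dimension formula (\ref{dimzeta}) as ``pure bookkeeping,'' asserting that $\dim Z_t$ splits evenly among the characters of $N\cap(1+\j)$ extending $\alpha_t|_{N\cap(1+\i)}$. That even splitting is precisely what must be proved; a priori the eigenspaces $Z_t(\phi_s)$ could have unequal dimensions. The paper devotes Lemma \ref{dim} to this: in the isotropic case one constructs an explicit $N\cap(1+\j)$-invariant polarization satisfying conditions (C1)--(C3) so that the action of $N\cap(1+\j)$ on $Z_t$ is monomial with all point stabilizers equal to $N\cap(1+\i)$, and the non-isotropic case ($m=2$, $h$ hermitian, ramified even) requires a separate argument comparing $\dim Top(\phi)$ and $[\U(V):G_tN]$ across different $\phi$. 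You would need to supply both of these.
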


Note that $Top(\phi)$ may be $(0)$ for some $\phi\in\widehat{N}$ and Theorem \ref{mainabelian} explains precisely when this happens.
Observe also that not all nonzero $Top(\phi)$ have the same dimension. A detailed discussion of this can be found in~\S\ref{prueba3}, based on Theorem \ref{mainabelian}
and the sizes of the $\U(V)$-orbits of~$\P$. Note as well that
while $G_t$-module $Z_t$ is irreducible in both \cite{CMS2} and \cite{HS}, this is no longer true in general, as evidenced by (\ref{dimzeta}).

Our tools to analyse the nonabelian case of Clifford theory are Theorem \ref{mainabelian} and a thorough investigation of the group homomorphism
$\Omega(\j)\to \j V/\i V$, given by $g\mapsto gt-t+\i V$, and found in \S\ref{homomega}. This allows us to transfer the weight of the problem from the unitary group to the Heisenberg group.

\begin{theorem}\label{mainnonabelian} (Nonabelian Clifford Theory)

Suppose $\j^2\subseteq \i$. Given $\phi\in \N$, let $t$ be the unique element of $T\cap\P$ such that the linear character
$\alpha_t$ of $\Omega(\i)$ enters $Top(\phi)$. Then

(a) $Z_t(\phi)$ remains irreducible upon restriction to $\Omega(j)$.

(b) The stabilizer of $Z_t(\phi)$ is $G_t N$ and $Top(\phi)\cong \ind_{G_t N}^{\U(V)} Z_t(\phi)$.

(c) Let $H_t$ be the subgroup of $H(U^\perp)$ generated by $(0,U)$, $(R,0)$ and all $(0,gt-t)$ such that $g\in\Omega(\j)$. Then the restriction
of $S'$ to $H_t$ leaves $Z_t(\phi)$ invariant and $Z\cong\ind_{H_t}^{H(U^\perp)} Z_t(\phi)$. In particular, $Z_t(\phi)$ is an irreducible
$H_t$-module whose stabilizer is $H_t$ itself, and whose index in $H(U^\perp)$ is precisely $|N\cap (1+\j)/N\cap (1+\i)|$.

(d) Let $\Gamma:\Omega(\j)\to \j V/\i V$ be defined by $\Gamma(g)=gt-t+\i V$. Then $\Gamma$ is a group homomorphism and
$[\j V/ \i V:\Gamma(\Omega(\j))]=|N\cap (1+\j)/N\cap (1+\i)|$.
\end{theorem}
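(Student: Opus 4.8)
The plan is to push the whole question, as announced just before the statement, from $\U(V)$ onto the Heisenberg group $H(U^\perp)$; the bridge is the hypothesis $\j^2\subseteq\i$. First I would record two facts about any $g\in\Omega(\j)$: it fixes $U=\i V$ pointwise, since $gu-u\in\j\cdot\i V=(0)$ because $\i\j=(0)$; and it acts trivially on $U^\perp/U=\j V/\i V$, since $gv-v\in\j\cdot\j V=\j^2V\subseteq\i V$ for $v\in\j V$. Hence $g|_{U^\perp}$ is a ``shear'': for $(r,v)\in H(U^\perp)$ the conjugate ${}^{g}(r,v)=(r,gv)$ differs from $(r,v)$ by $(0,gv-v)$ with $gv-v\in U$, and (recall $S'(0,u)=1_Z$ for $u\in U$) one gets $S'({}^{g}(r,v))=S'(r,v)$. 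So the intertwining property of the Weil representation makes $W'(g|_{U^\perp})$ commute with $S'(H(U^\perp))$; as $Z$ is $H(U^\perp)$-irreducible, Schur's lemma gives $W'(g|_{U^\perp})=\eta(g)1_Z$ with $\eta\colon\Omega(\j)\to\C^{*}$ a linear character. Substituting into \eqref{formu},
\[
W_t(g)=\eta(g)\,S'\bigl(f(gt,t),\,gt-t\bigr),\qquad g\in\Omega(\j).
\]

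With this identity the remaining steps are largely formal except for one. For (d), that $\Gamma$ is a homomorphism is immediate: write $(gg')t-t=g(g't-t)+(gt-t)$ and use that $g't-t\in\j V=U^\perp$ together with the triviality of $g$ on $U^\perp/U$ to get $g(g't-t)\equiv g't-t\pmod{\i V}$, whence $\Gamma(gg')=\Gamma(g)+\Gamma(g')$. For the index I would note that if $g\in\Omega(\j)$ and $gt\equiv t+w\pmod{\i V}$, then $h(gt,gt)=h(t,t)$ together with $h(w,w)\in\j^2\subseteq\i$ and $h(t,\i V)\subseteq\i$ forces $h(t,w)+h(w,t)\in\i$; since $w+\i V\mapsto h(t,w)+\i$ is a surjection $\j V/\i V\to\j/\i$ ($t$ is primitive and $h$ is nondegenerate), $\Gamma(\Omega(\j))$ lies in the preimage of a fixed $*$-symmetry class in $\j/\i$. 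The reverse inclusion — every such $w$ is realized by some $g\in\Omega(\j)$ — and the identification of the resulting index with $\abs{N\cap(1+\j)/N\cap(1+\i)}$ are precisely what the study of $\Gamma$ in \S\ref{homomega} supplies, via the unitary transvections and the central scalars $a\cdot1_V$, $a\in N\cap(1+\j)$, that lie in $\Omega(\j)$; that case-by-case bookkeeping is where I expect the real work to be.

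For (c) I would first check that $H_t$ is closed under multiplication, using $f(U,U^\perp)=0$ (because $\i\j=(0)$) and the additivity of $\Gamma$, so $H_t=(R,0)\cdot\{(0,w):w+\i V\in\Gamma(\Omega(\j))\}$; it contains $Z(H(U^\perp))=(R,0)$, hence is normal, and $[H(U^\perp):H_t]=[\j V/\i V:\Gamma(\Omega(\j))]=\abs{N\cap(1+\j)/N\cap(1+\i)}$ by (d). The idempotent $e_\phi$ with $Z_t(\phi)=e_\phi Z_t$ comes from the central subgroup $N\cap(1+\j)$ of $\U(V)$, so $W_t(N\cap(1+\j))$ commutes with $W_t(\Omega(\j))$; by the displayed identity the latter generates, on $Z_t=Z$, the same subalgebra of $\End(Z)$ as $S'(H_t)$, so $e_\phi$ commutes with $S'(H_t)$ and $Z_t(\phi)$ is $S'(H_t)$-invariant. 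By Theorem \ref{mainabelian}(e), $\dim Z_t(\phi)=\sqrt{\abs{U^\perp/U}}/\abs{N\cap(1+\j)/N\cap(1+\i)}$, hence $[H(U^\perp):H_t]\dim Z_t(\phi)=\sqrt{\abs{U^\perp/U}}=\dim Z$; since $Z_t(\phi)$ embeds into $Z|_{H_t}$ and $Z$ is $H(U^\perp)$-irreducible, Frobenius reciprocity yields a nonzero $H(U^\perp)$-map $\ind_{H_t}^{H(U^\perp)}Z_t(\phi)\to Z$, which is onto since $Z$ is irreducible and bijective by the dimension count. Thus $Z\cong\ind_{H_t}^{H(U^\perp)}Z_t(\phi)$; this induced module being irreducible and $H_t$ normal, standard Clifford theory forces $Z_t(\phi)$ to be an irreducible $H_t$-module with stabilizer exactly $H_t$.

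Finally (a) and (b). For (a): on the $\Omega(\j)$-invariant space $Z_t(\phi)$ the operators $W_t(g)$, $g\in\Omega(\j)$, generate the same subalgebra of $\End(Z_t(\phi))$ as the operators $S'(k)$, $k\in H_t$, by the displayed identity, so the $H_t$-irreducibility just proved gives $\Omega(\j)$-irreducibility. For (b): the isomorphism $Top(\phi)\cong\ind_{G_tN}^{\U(V)}Z_t(\phi)$ is Theorem \ref{mainabelian}(e), and the stabilizer of $Z_t(\phi)$ in $\U(V)$ contains $G_tN$ (as $Z_t(\phi)$ is a $G_tN$-module) and is contained in it, because $\Omega(\i)$ acts on $Z_t(\phi)$ through the scalar character $\al_t$, so anything stabilizing $Z_t(\phi)$ stabilizes $\al_t$, whose stabilizer is $G_tN$ again by Theorem \ref{mainabelian}(e).
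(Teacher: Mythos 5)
Your proposal is correct and follows essentially the same route as the paper: Schur's lemma scalarizes $W'(g|_{U^\perp})$ on $\Omega(\j)$, the whole question is transferred to the subgroup $H_t$ of $H(U^\perp)$, and $\ind_{H_t}^{H(U^\perp)}Z_t(\phi)\cong Z$ is forced by the uniqueness of the Schr\"odinger module of type $\lambda$ together with Proposition \ref{index} and Lemma \ref{dim}. The only divergence is in (d), where you argue the lower bound on $[\j V/\i V:\Gamma(\Omega(\j))]$ directly from $h(gt,gt)=h(t,t)$ (leaving the case-by-case identification of the index of the resulting $*$-symmetry class as a sketch); the paper instead gets this equality for free, since the Frobenius-reciprocity/dimension count you run in (c) already forces equality in $\dim Z_t(\phi)\,[H(U^\perp):H_t]\leq\dim Z$, so that extra computation is dispensable.
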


\section{Preliminaries}

A vector $v\in V$ is said to be isotropic if $h(v,v)=0$. We say that $h$ is isotropic if $V$ has a primitive isotropic vector.
By a hyperbolic plane we mean a free submodule $E$ of $V$ having rank 2 admitting a basis $\{u,v\}$ formed by isotropic vectors $u,v$ such that $h(u,v)=1$. Such a basis of~$E$ is said to be hyperbolic.

We set $\varepsilon=-1$ if $h$ is skew hermitian and $\varepsilon=1$ if $h$ is hermitian. Given $a\in A$ and a pair of isotropic orthogonal vectors $u,v\in V$ we define the Eichler transformation
$\rho_{a,u,v}$ of $V$ to be the element of $\U(V)$ given by
\begin{equation}
\label{eichler}
\rho_{a,u,v}(x)=x+ah(u,x)v-\varepsilon a^* h(v,x)u,\quad x\in V.
\end{equation}
In particular, if $a\in A$ satisfies $a^*=-\varepsilon a$ and $u\in V$ is isotropic then the unitary transvection $\tau_{a,u}=\rho_{a/2,u,u}$
is the element of $\U(V)$ defined by
$$
\tau_{a,u}(x)=x+ah(u,x)u,\quad x\in V.
$$
Note that if $\k$ is an ideal of $A$ and $a\in\k$ then $\rho_{a,u,v},\tau_{a,u}\in\Omega(\k)$.

\begin{lemma} Every ideal of $A$ is $*$-invariant. Moreover, if $\k$ is an ideal of $A$ then $\k=R\cap\k\oplus S\cap\k$.
\end{lemma}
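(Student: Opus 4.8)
The plan is to work through the three cases (symplectic, unramified, ramified) separately, using the explicit description of $A$ as an $R$-algebra provided in the introduction, together with the fact that $A$ is a principal local ring with maximal ideal $\r = A\pi$. Since every ideal of $A$ has the form $\r^k = A\pi^k$ for some $k$, the first claim — that every ideal is $*$-invariant — reduces to checking that $(A\pi^k)^* = A\pi^k$. In the symplectic case this is vacuous since $*$ is trivial. In the unramified case $\pi = p \in R$, so $\pi^* = \pi$ and $(A\pi^k)^* = A^*\pi^k = A\pi^k$ since $A^* = A$. In the ramified case $\pi^* = -\pi$, so $(\pi^k)^* = (-1)^k\pi^k$, and again $(A\pi^k)^* = A\pi^k$ because $-1$ is a unit. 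So the first assertion is essentially immediate once one invokes the classification of ideals in a principal local ring.

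For the second assertion, fix an ideal $\k$; by the above it is $*$-invariant. The decomposition $A = R \oplus S$ (additive direct sum, $S$ the skew-hermitian elements) is given, and I want to show it restricts to $\k$, i.e. $\k = (R\cap\k)\oplus(S\cap\k)$. One inclusion is trivial: $(R\cap\k)+(S\cap\k)\subseteq\k$, and the sum is direct since it is a subsum of $R\oplus S$. For the reverse inclusion, take $a\in\k$ and write $a = r + s$ with $r\in R$, $s\in S$; explicitly $r = \tfrac12(a+a^*)$ and $s = \tfrac12(a-a^*)$, which makes sense because $A$ has odd characteristic so $2$ is a unit. Since $\k$ is $*$-invariant, $a^*\in\k$, hence $r = \tfrac12(a+a^*)\in\k$ and $s = \tfrac12(a-a^*)\in\k$, so $r\in R\cap\k$ and $s\in S\cap\k$. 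This gives $a\in(R\cap\k)\oplus(S\cap\k)$, completing the argument.

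The only point requiring any care — and the place I'd expect a careful reader to want a sentence of justification — is the reduction of "every ideal" to "every power of $\r$": this is the standard fact that a finite (or more generally Noetherian) principal local ring has its ideals linearly ordered as $A \supsetneq \r \supsetneq \r^2 \supsetneq \cdots \supsetneq (0)$, which is already implicit in the paper's description of $A$ as a quotient of a discrete valuation ring by a power of its maximal ideal. Given that, both parts of the Lemma are formal consequences of $2\in A^\times$ and the explicit form $\pi^* = \pm\pi$. I do not anticipate a genuine obstacle here; the lemma is foundational and the work is in correctly organizing the trivial case-check rather than in any single hard step.
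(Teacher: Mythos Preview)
Your proposal is correct and matches the paper's own proof essentially step for step: both reduce the first claim to $*$-invariance of $\r$ (via $\pi^*=\pm\pi$ and the fact that every ideal is a power of $\r$), and both obtain the second claim by writing $a=r+s$, noting $a^*=r-s\in\k$ by $*$-invariance, and using that $2$ is a unit to conclude $r,s\in\k$. The only cosmetic difference is that you spell out the three cases for $\pi^*$ explicitly, whereas the paper simply asserts that $\r$ is $*$-invariant.
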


\begin{proof} It is clear that $\r$ is a $*$-invariant ideal of $A$. Since every ideal of $A$ is a power of $\r$, it follows that every
ideal of $A$ is $*$-invariant.

Let $a\in\k$. Then $a=r+s$, where $r\in R$ and $s\in S$. Given that $r-s=(r+s)^*=a^*\in\k$, it follows that $2r,2s\in\k$, whence $r,s\in\k$.
\end{proof}

\begin{lemma}\label{perp} Given an $A$-submodule $P$ of $V$, set
$$
P^{\perp} = \{v \in V\,|\, f(v,u) = 0\text{ for all }u\in P \},
$$
and
$$
\quad P^{\dagger} = \{v \in V\,|\, h(v,u) = 0\text{ for all }u\in P \}.
$$
Then $P^\perp=P^\dagger$.
\end{lemma}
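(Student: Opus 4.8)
The plan is to prove the two inclusions separately. The inclusion $P^\dagger\subseteq P^\perp$ is immediate and uses nothing beyond $f=d\circ h$: if $v\in P^\dagger$ then $h(v,u)=0$ for all $u\in P$, hence $f(v,u)=d(h(v,u))=d(0)=0$ for all $u\in P$, so $v\in P^\perp$. All the content lies in the reverse inclusion $P^\perp\subseteq P^\dagger$, and there I would use in an essential way that $P$ is an $A$-submodule, not merely an $R$-submodule (the statement genuinely fails for $R$-submodules).

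So fix $v\in P^\perp$ and $u\in P$; the goal is $h(v,u)=0$. For every $a\in A$ we have $au\in P$, whence $0=f(v,au)=d(h(v,au))$. Setting $c=h(v,u)\in A$ and using the sesquilinearity of $h$ (together with the fact that $a\mapsto a^*$ is a bijection of $A$, to absorb a possible $*$), this reads $d(ac)=0$ for all $a\in A$. Thus the lemma reduces to the following purely ring-theoretic assertion, which I expect to be the crux: \emph{if $c\in A$ satisfies $d(ac)=0$ for all $a\in A$, then $c=0$}; equivalently, the $R$-bilinear pairing $A\times A\to R$, $(a,b)\mapsto d(ab)$, is nondegenerate, or again, $\mu=\lambda\circ d\colon A^+\to\C^*$ is a primitive character.

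To prove this sublemma I would argue case by case on the nature of $*$, using the explicit description of $d$ recalled in the Introduction. In each case the observation is the same: if $d(c)\neq 0$ there is nothing to prove (take $a=1$), while if $d(c)=0$ then $c$ is forced into a distinguished $R$-submodule of $A$ on which one further multiplication makes $d$ injective. In the symplectic case $d$ is the identity and there is nothing to do. In the unramified case $d(c)=0$ forces $c=s\theta$ with $s\in R$, and then, as $\theta^2\in R$, one has $d(\theta c)=d(s\theta^2)=s\theta^2$, which is nonzero when $s\neq 0$ because $\theta$ is a unit. In the ramified even case $d(c)=0$ forces $c=r\in R$, and $d(\pi c)=d(r\pi)=r$. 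In the ramified odd case $d(c)=0$ forces $c=s\pi\in S$ with $s\in R$; here $c\neq 0$ means $s\notin\mathrm{ann}_R(\pi)=Rp^{\ell-1}$, and then $d(\pi c)=d(sp)=sp$, which is nonzero because $\mathrm{ann}_R(p)=Rp^{\ell-1}$ as well. The main obstacle is this last case: there $A$ is not free over $R$, and one genuinely needs the exact value of the annihilator of $\pi$ in $R$; the remaining three cases amount to elementary linear algebra over $R$.
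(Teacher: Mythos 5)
Your proof is correct and follows essentially the same route as the paper's: the reverse inclusion is obtained by replacing $u\in P$ with $\theta u$ or $\pi u$ (your ``$au$ for all $a\in A$'') and exploiting the explicit form of $d$ and the decomposition $A=R\oplus S$ in each of the four cases, with the ramified odd case indeed hinging on $\mathrm{ann}_R(\pi)=Rp^{\ell-1}$. Your repackaging of the crux as nondegeneracy of the pairing $(a,b)\mapsto d(ab)$ is a harmless reorganization whose verification coincides with the paper's computations.
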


\begin{proof} Clearly, $P^{\dagger} \subseteq P^{\perp}$. Let $v \in P^{\perp}$, so that
$f(v,u)=0$ for all $u\in P$.

Suppose first $*$ is ramified of odd (resp. even) type. Let $u \in P$. Then $h(v,u)\in S$ (resp. $h(v,u)\in R$)
and therefore $\pi h(v,u)\in R$ (resp. $\pi h(v,u)\in S$). But $\pi h(v,u)=h(v,\pi u)$ and $\pi u\in P$, so
$$
\pi h(v,u)=h(v,\pi u) \in R \cap S = {0}.
$$
We infer that $h(v,u) \in \min \cap S = {0}$ (resp. $h(v,u) \in \min \cap R = {0}$), which implies
$v \in P^{\dagger}$.

Suppose next $*$ is unramified. Let $u \in P$. Then $h(v,u)\in S$ and therefore $\theta h(v,u)\in R$. But $\theta h(v,u)=h(v,\theta u)$ and $\theta u\in P$, so
$$
\theta h(v,u)=h(v,\theta u) \in R \cap S = {0}.
$$
Since $\theta$ is a unit in $A$, we infer $h(v,u)=0$, whence $v \in P^{\dagger}$.

The symplectic case being obvious, the proof is complete.
\end{proof}

\begin{cor}\label{nondeg} The skew symmetric form $f$ is nondegenerate.
\end{cor}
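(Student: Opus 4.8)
The plan is to read this off from Lemma \ref{perp} with essentially no extra work. Recall that $f=d\circ h$ and that, in the notation of Lemma \ref{perp}, the (left, hence by skew-symmetry also right) radical of $f$ is exactly $V^{\perp}$, while the radical of the nondegenerate form $h$ is exactly $V^{\dagger}$. Taking $P=V$ in Lemma \ref{perp} yields $V^{\perp}=V^{\dagger}$. Since $h$ is assumed nondegenerate, $V^{\dagger}=0$, and therefore $V^{\perp}=0$; this is precisely the statement that $f$ is nondegenerate. Thus the entire content of the corollary is contained in Lemma \ref{perp}, and the role of the specific choice of $d$ in each of the symplectic, unramified, and ramified (odd and even) cases is already absorbed into the case analysis carried out in the proof of that lemma.

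The one point meriting a sentence of care is that ``nondegenerate'' for an $R$-bilinear form over the ring $R$ nominally asks that the induced map $V\to\Hom_R(V,R)$ be bijective, not merely injective, and what the argument above verifies directly is injectivity (its kernel being $V^{\perp}$). This gap is harmless: $V$ is a free $A$-module of finite rank, hence a free $R$-module of finite rank, so $\Hom_R(V,R)\cong V$ as $R$-modules and in particular the two finite sets $V$ and $\Hom_R(V,R)$ have equal cardinality; an injective map between finite sets of the same size is automatically surjective. Skew-symmetry of $f$ is not in question, having already been recorded when $f$ was introduced (and it follows in any case from $h$ being hermitian or skew hermitian together with the fact that $d$ kills $h(v,v)$ in every case). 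I do not foresee any real obstacle: once Lemma \ref{perp} is in hand the corollary is immediate, and the remark about injective-versus-bijective is the only bookkeeping required.
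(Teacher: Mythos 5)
Your proof is correct and is exactly the paper's argument: apply Lemma \ref{perp} with $P=V$ and use nondegeneracy of $h$ to conclude $V^{\perp}=V^{\dagger}=0$. The extra remark about injectivity versus bijectivity is sound but not needed beyond what you already supplied.
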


\begin{proof} Apply Lemma \ref{perp} with $P=V$.
\end{proof}

\begin{lemma}\label{ij} We have $(\i V)^\perp=\j V$.
\end{lemma}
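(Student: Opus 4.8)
The plan is to work with $h$ rather than $f$, which is legitimate by Lemma \ref{perp} since $(\i V)^\perp = (\i V)^\dagger$. So I want to show $(\i V)^\dagger = \j V$, where $\j = \mathrm{ann}_A(\i)$ is the annihilator of $\i$ in $A$. Since $V$ is free, fix an $A$-basis $v_1,\dots,v_m$ of $V$. One inclusion is immediate: if $v \in \j V$ and $x \in \i V$, write $x = \sum a_i x_i$ with $a_i \in \i$ and $x_i \in V$; then $h(v,x) = \sum h(v,x_i)a_i^*$, and since $\j$ is $*$-invariant (first Lemma) we have $h(v,x_i) \in \j$ for each $i$ (as $v \in \j V$ forces $h(v,x_i) \in \j$ — here I should write $v = \sum b_j v_j$ with $b_j \in \j$ and use that $h$ is $A$-bilinear in the appropriate sense), and $\j \i = (0)$ by definition of $\j$. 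Hence $h(v,x) = 0$, giving $\j V \subseteq (\i V)^\dagger$.

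For the reverse inclusion, suppose $v \in (\i V)^\dagger$, so $h(v, ax_0) = 0$ for all $a \in \i$ and all $x_0 \in V$; equivalently $a^* h(v, x_0) = 0$ for all such $a, x_0$, i.e. $\i^* \cdot h(v,x_0) = (0)$, and since $\i$ is $*$-invariant this says $\i \cdot h(v,x_0) = (0)$, i.e. $h(v,x_0) \in \j$ for every $x_0 \in V$. Now I use nondegeneracy of $h$ together with the structure of $A$ as a finite commutative principal local ring. Write $v = \sum b_j v_j$ with $b_j \in A$. I need to deduce $b_j \in \j$ for all $j$. Picking $x_0$ appropriately (using the dual basis of $V$ with respect to the nondegenerate form $h$, or more directly using that the Gram matrix of $h$ in the basis $v_1,\dots,v_m$ is invertible over $A$), the condition $h(v,x_0) \in \j$ for all $x_0$ translates into: every coordinate of $v$, after multiplication by the invertible Gram matrix, lies in $\j$; since the matrix is invertible over $A$ and $\j$ is an ideal, this forces each $b_j \in \j$, hence $v \in \j V$.

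The one point that needs care — and which I'd flag as the only real obstacle — is the passage from "$h(v,x_0) \in \j$ for all $x_0$" to "$v \in \j V$": it relies on the Gram matrix $G = (h(v_i,v_j))$ being invertible over $A$ (equivalent to nondegeneracy of $h$ over the local ring $A$, since $\det G \in A^\times$ iff $\det G \notin \r$ iff the reduction mod $\r$ is nondegenerate), and on $\j$ being an $A$-submodule stable under the entries of $G^{-1}$, which is automatic since $\j$ is an ideal. Concretely: the vector of coordinates of $v$ is $b = G^{-1}c$ where $c_i = h(v_i, v)$ (up to a conjugate, using $*$-semilinearity, which is harmless as $\j$ is $*$-invariant), and $c \in \j^m$ implies $b \in \j^m$. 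Everything else is routine bookkeeping with the $*$-invariance of ideals and the defining property $\j\i = (0)$.
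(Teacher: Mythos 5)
Your proof is correct, but it follows a genuinely different route from the paper's. The paper disposes of the reverse inclusion by a pure counting argument: it observes that $|\i|\,|\j|=|A|$ (true for the annihilator of any ideal in the finite chain ring $A$), hence $|\i V|\times|\j V|=|V|$, and then invokes \cite[Lemma 2.1]{CMS2}, which gives $|P|\times|P^\perp|=|V|$ for any submodule $P$ with respect to the nondegenerate alternating form $f$; combined with the obvious inclusion $\j V\subseteq(\i V)^\perp$ this forces equality. You instead reduce to $h$ via Lemma \ref{perp} and argue directly with coordinates: the condition $v\in(\i V)^\dagger$ is equivalent to $\i\cdot h(v,x_0)=(0)$ for all $x_0$, i.e.\ $h(v,x_0)\in\j=\mathrm{ann}_A(\i)$, and the invertibility of the Gram matrix over the local ring $A$ (equivalent to nondegeneracy of $h$, by reduction mod $\r$ or by a cardinality argument for the map $V\to\Hom_A(V,A)$) then pushes this back to the coordinates of $v$, using that $\j$ is an ideal and is $*$-invariant. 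Your version is self-contained -- it does not rely on the external counting lemma from \cite{CMS2} -- and it makes transparent why the annihilator $\j$ is the right answer; the paper's version is shorter given the citation and sidesteps all bookkeeping with semilinearity conventions. You correctly identified the only delicate point (passing from ``all pairings land in $\j$'' to ``the coordinates lie in $\j$'') and your resolution of it is sound.
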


\begin{proof} It is clear that $\j V\subseteq (\i V)^\perp$. Moreover, since $|\i||\j|=A$, we have
$$
|\i V|\times |\j V|=|V|.
$$
On the other hand, \cite[Lemma 2.1]{CMS2} shows that
$$
|\i V|\times |(\i V)^\perp|=|V|,
$$
so $|(\i V)^\perp|=|\j V|$ and therefore $(\i V)^\perp=\j V$.
\end{proof}

\begin{lemma}\label{nis} We have $N\cap (1+\i)=1+(S\cap\i)$.
\end{lemma}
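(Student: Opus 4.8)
The plan is to linearize the norm condition $aa^{*}=1$ using the hypothesis $\i^{2}=(0)$. Write a typical element of $1+\i$ as $a=1+x$ with $x\in\i$. First I would observe that there is nothing to check regarding membership in $A^{\times}$: since $\i\subseteq\r$ and $A$ is local, every element of $1+\i$ is a unit (equivalently, $x^{2}\in\i^{2}=(0)$, so $(1+x)(1-x)=1$). Hence $1+\i\subseteq A^{\times}$, and also $1+(S\cap\i)\subseteq A^{\times}$.

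Next I would use the first lemma of this section: every ideal of $A$ is $*$-invariant, so $x\in\i$ forces $x^{*}\in\i$ and therefore $xx^{*}\in\i^{2}=(0)$. Consequently
$$
aa^{*}=(1+x)(1+x^{*})=1+x+x^{*}+xx^{*}=1+(x+x^{*}).
$$
Thus $a\in N$ if and only if $x+x^{*}=0$, i.e. if and only if $x^{*}=-x$, i.e. if and only if $x\in S$. This yields both inclusions at once: if $1+x\in N\cap(1+\i)$ then $x\in S\cap\i$; conversely, if $x\in S\cap\i$ then $1+x\in A^{\times}$ and $aa^{*}=1+(x+x^{*})=1$, so $1+x\in N\cap(1+\i)$.

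There is no genuine obstacle in this lemma; the statement is essentially a one-line computation. The only two points that merit a remark are that $1+\i$ automatically consists of units (immediate from locality, or from $x^{2}=0$) and that the cross term $xx^{*}$ vanishes — which is precisely where the hypothesis $\i^{2}=(0)$ is used, together with the $*$-invariance of the ideal $\i$.
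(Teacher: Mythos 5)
Your proof is correct and is essentially the paper's own argument: the paper's one-line proof likewise uses that $\i$ is a $*$-invariant ideal of square zero to kill the cross term $xx^*$, reducing $aa^*=1$ to $x^*=-x$, i.e.\ $x\in S$. Your extra remark that $1+\i\subseteq A^\times$ (so the unit condition in the definition of $N$ is automatic) is a harmless elaboration the paper leaves implicit.
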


\begin{proof} Since $\i$ is a $*$-invariant ideal of $A$ of square $(0)$, for $z\in\i$ we have $1+z\in N$
if and only if $z\in S$.
\end{proof}

\section{Stabilizer of $\alpha_t$}\label{lem}

\begin{prop}
	\label{alv=alw}
	Suppose $v$, $w \in V $ are primitive and satisfy $\alpha _v = \alpha _w$. Then there exist $z \in N$ and a vector $u_0 \in U^\perp$ such that $w= zv + u_0$
\end{prop}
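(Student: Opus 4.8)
The plan is to unwind the hypothesis $\alpha_v=\alpha_w$ into a linear condition on $v-w$ and then recognize that condition as membership in $U^\perp$ up to a scalar from $N$. Recall that by \eqref{formu2} we have $\alpha_v(g)=\mu(h(gv,v))$ for $g\in\Omega(\i)$, and similarly for $w$. The transvections and Eichler transformations supply a plentiful stock of elements of $\Omega(\i)$: for any $a\in\i$ and any isotropic $x\in V$ we have $\tau_{a,x}\in\Omega(\i)$, and for $a\in\i$ and isotropic orthogonal $x,y$ we have $\rho_{a,x,y}\in\Omega(\i)$. Evaluating $\alpha_v$ and $\alpha_w$ on these elements and using $\mu=\lambda\circ d$ together with the primitivity of $\lambda$ should force, for each such generator $g$, the identity $h(gv,v)\equiv h(gw,w)\pmod{\j}$ (the annihilator of $\i$), hence a quadratic-type identity relating $v$ and $w$ modulo $\j V$.

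The key step is to promote these scalar identities to the vector statement $w\equiv zv\pmod{U^\perp}$ for a suitable $z\in N$. First I would show $v$ and $w$ generate the same line modulo $U^\perp=\j V$: using $g=\tau_{a,x}$ with $x$ ranging over isotropic primitive vectors orthogonal to $v$, the condition $\alpha_v(g)=\alpha_w(g)$ reads $a\,h(x,v)h(v,x)$ and $a\,h(x,w)h(w,x)$ having equal image under $d$ for all $a\in\i$, i.e.\ $h(x,v)h(v,x)\equiv h(x,w)h(w,x)\pmod{\j}$; combined with the Eichler transformations $\rho_{a,x,y}$, which give $a\,h(x,v)h(y,v)\equiv a\,h(x,w)h(y,w)\pmod{(0)}$ after applying $d$, one extracts that the rank-one form $u\mapsto h(u,v)\overline{h(u,w)}$ type data pins down $w$ modulo $U^\perp$ as an $A$-multiple of $v$. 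Write $w=av+u_0$ with $a\in A$, $u_0\in U^\perp$; since $v$ is primitive and (by a hyperbolic-plane argument, or because $m>1$ and the relevant forms admit enough isotropic vectors) we may choose a test vector pairing nontrivially with $v$, the requirement that $\alpha_w$ again be well-defined and equal to $\alpha_v$ forces $a\overline{a}=1$, i.e.\ $a\in N$, and also $a$ to be a unit so that $w$ is primitive as required; set $z=a$.

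The main obstacle will be the degenerate small-rank configurations, especially $m=2$ with $h$ hermitian and anisotropic, where hyperbolic planes are unavailable and the supply of isotropic vectors orthogonal to $v$ is thin. There I expect to argue instead directly with the two-dimensional module: decompose $V=Av\oplus Av^\dagger$ (an orthogonal decomposition since $h(v,v)$ is a unit in the anisotropic case), express any $g\in\Omega(\i)$ in matrix form with respect to this basis, and compute $h(gv,v)$ explicitly; the equation $\alpha_v=\alpha_w$ then becomes an explicit congruence that one solves by hand to produce $z\in N$ with $w-zv\in U^\perp$. A secondary technical point is bookkeeping the distinction between the ramified odd and even cases, since the map $d$ and the annihilator $\j$ behave differently there; but Lemmas \ref{perp}, \ref{ij}, and \ref{nis}, together with the primitivity of $\lambda$, handle these uniformly once the test elements are chosen, so I would isolate the $m=2$ anisotropic hermitian case as a separate short lemma and treat everything else by the transvection/Eichler computation above.
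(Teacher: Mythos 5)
Your overall strategy is the same as the paper's (which in turn follows \cite[Proposition 3.1]{HS}): evaluate $\alpha_v=\alpha_w$ on Eichler-type elements of $\Omega(\i)$, extract congruences of the form $h(v,z_1)h(z_2,v)\equiv h(w,z_1)h(z_2,w)\bmod\j$, conclude that $w\equiv tv\bmod \j V$ with $tt^*\equiv 1\bmod\j$, and finally replace $t$ by an element of $N$. But you miss the single observation that makes the argument uniform and case-free: because $a\in\i$ and $\i^2=(0)$, the formula (\ref{eichler}) defines an element of $\U(V)$, hence of $\Omega(\i)$, for \emph{arbitrary} $z_1,z_2\in V$ --- the terms quadratic in $a$ vanish since $\i^2=(0)$, and the terms linear in $a$ cancel identically with no isotropy or orthogonality hypothesis on $z_1,z_2$. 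This yields the displayed congruence for \emph{all} pairs $(z_1,z_2)$, which is exactly the strength needed to force linear dependence of $v$ and $w$ modulo $\j V$ and the condition $tt^*\equiv 1\bmod\j$.

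By restricting yourself to isotropic test vectors you create two genuine problems. First, transvections $\tau_{a,x}$ with $x$ isotropic only give the quadratic specialization $h(x,v)h(v,x)\equiv h(x,w)h(w,x)$, and a quadratic identity known only on the (non-linear) set of isotropic vectors does not polarize for free; the Eichler maps $\rho_{a,x,y}$ with $x,y$ isotropic \emph{and orthogonal} still do not cover all pairs. Second, you are forced to quarantine the $m=2$ anisotropic hermitian case, where your treatment is only a plan; the explicit matrix computation you propose there would, in effect, rediscover the general Eichler observation above. Finally, your closing step ``forces $a\overline a=1$'' is too quick: the congruences only give $tt^*\equiv 1\bmod\j$, and one must still pass from such a unit $t$ to a genuine $z\in N$, absorbing $(t-z)v\in\j V=U^\perp$ into $u_0$; this uses the primitivity of $v$ and is where the structure of $N\cap(1+\j)$ enters.
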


\begin{proof} The proof given in \cite[Proposition 3.1]{HS} remains valid, mutatis mutandis. Thus, we restrict to indicate the main steps
and refer the reader to \cite[Proposition 3.1]{HS} for details.

Let $a\in \i $ and $z_1$,$z_2 \in V$. Since $\i^2=(0)$, we readily see that $\rho _{a,z_1,z_2}$, as defined in (\ref{eichler}), belongs
to $\U(V)$ and hence to $\Omega (\i)$. From $\alpha _v(\rho_{a,z_1,z_2}) = \alpha _w(\rho_{a,z_1,z_2})$, we obtain
$$
h(v,z_1)h(z_2,v)\equiv h(w,z_1)h(z_2,w) \mod \j,\quad z_1,z_2\in V.
$$
From this we deduce that $v,w$ are linearly dependent and, in fact, that
$$
w= tv + \pi ^s u
$$
for some $t \in A^{\times}$, $u\in V$ and $0 <s \leq e$, where $e$ is the nilpotency degree of $\pi$ and $s$ is chosen as large as possible.
The choice of $s$ ensures that $u_1=\pi^s u\in\j V$ and $tt^*\equiv 1\mod \j$. This readily implies $w= zv + u_0$, for some $z\in N$ and
$u_0 \in \j V$.
\end{proof}

\begin{prop}\label{inertia} Let $v\in\P$. Then the stabilizer $\alpha_v$ in $\U(V)$ is $NG_v$.
\end{prop}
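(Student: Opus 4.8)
The plan is to show the two inclusions $NG_v\subseteq \mathrm{Stab}_{\U(V)}(\alpha_v)$ and $\mathrm{Stab}_{\U(V)}(\alpha_v)\subseteq NG_v$. For the first inclusion, I would argue directly from the formula \eqref{formu2}. If $g\in G_v$, then $gv\equiv v\bmod U^\perp=\j V$, and since $\alpha_v$ depends only on $h(xv,v)$ modulo the annihilator relations built into $\mu\circ h$, one checks that $h(gv,v)\equiv h(v,v)\bmod\j$ and hence that conjugating $\alpha_v$ by $g$ returns $\alpha_v$; here one uses that $\U(V)$ acts on the linear characters of the normal subgroup $\Omega(\i)$ and that $\alpha_v^g(x)=\alpha_v(g^{-1}xg)$, reducing the claim to a congruence computation with $h$. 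If $z\in N$, then $z$ is central in $\U(V)$ (as noted in the excerpt $N=Z(\U(V))$), so $z$ fixes every linear character of $\Omega(\i)$, in particular $\alpha_z\cdot$ acts trivially; alternatively $\alpha_{zv}(x)=\mu(h(xzv,zv))=\mu(zz^*h(xv,v))=\mu(h(xv,v))=\alpha_v(x)$ since $zz^*=1$, so $\alpha_{zv}=\alpha_v$ and the stabilizer of $\alpha_v$ contains the stabilizer of $v$ up to the scalar action of $N$.

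For the reverse inclusion, suppose $g\in\U(V)$ stabilizes $\alpha_v$, i.e.\ $\alpha_v^g=\alpha_v$. The key observation is that $\alpha_v^g=\alpha_{g v}$: indeed, conjugating the character $\alpha_v$ of the normal subgroup $\Omega(\i)$ by $g$ and using $\mu(h(xv,v))$ together with the $\U(V)$-equivariance of $h$ yields $\alpha_v^g(x)=\mu(h(x\cdot gv,gv))=\alpha_{gv}(x)$. Since $v$ is primitive and $g\in\U(V)$, the vector $w:=gv$ is again primitive, and we are in the situation $\alpha_v=\alpha_w$ with $v,w\in\P$. Now apply Proposition \ref{alv=alw}: there exist $z\in N$ and $u_0\in U^\perp=\j V$ with $gv=w=zv+u_0$. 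Equivalently $(z^{-1}g)v=v+z^{-1}u_0\equiv v\bmod U^\perp$, since $U^\perp$ is $\U(V)$-invariant and $z$ acts by a scalar. Hence $z^{-1}g\in G_v$ by the definition of $G_v$, so $g\in NG_v$, completing the inclusion.

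The main obstacle I expect is verifying carefully that $\alpha_v^g=\alpha_{gv}$ and, in the first half, the congruence $h(gv,v)\equiv h(v,v)\bmod\j$ for $g\in G_v$: one must track that the relevant identity $\mu(h(xv,v))$ really transforms as claimed under the conjugation action, using $h(ga,gb)=h(a,b)$ for $g\in\U(V)$ and the fact that $\mu=\lambda\circ d$ kills $d(\i\cdot\j)=0$ so that perturbing either slot of $h$ by an element of $\j V$ does not change $\alpha_v$ on $\Omega(\i)$ (because $xv-v\in\i V$ for $x\in\Omega(\i)$ and $\i\cdot\j=(0)$). Once this bookkeeping is in place, the proof is essentially a one-line appeal to Proposition \ref{alv=alw} together with the centrality of $N$ and the definition of $G_v$; I would write it in that order: (1) $N$ central $\Rightarrow$ $\alpha_{zv}=\alpha_v$; (2) $g\in G_v\Rightarrow\alpha_{gv}=\alpha_v$; (3) $\alpha_v^g=\alpha_{gv}$; (4) conversely $\alpha_v^g=\alpha_v\Rightarrow\alpha_{gv}=\alpha_v\Rightarrow gv=zv+u_0$ by Proposition \ref{alv=alw} $\Rightarrow g\in NG_v$.
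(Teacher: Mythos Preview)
Your overall strategy coincides with the paper's: establish ${}^g\alpha_v=\alpha_{gv}$, verify directly that $N$ (being central) and $G_v$ stabilize $\alpha_v$, and for the converse apply Proposition~\ref{alv=alw} to write $gv=zv+u_0$ and conclude $z^{-1}g\in G_v$. The converse direction and the treatment of $N$ are correct as written.

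There is, however, a real gap in your argument that $g\in G_v$ implies $\alpha_{gv}=\alpha_v$. Writing $gv=v+u$ with $u\in U^\perp=\j V$ and using $xu=u$ (which does follow from $\i\j=(0)$), one finds for $x\in\Omega(\i)$ that
\[
h(x(v+u),v+u)-h(xv,v)=h(xv-v,u)+\bigl(h(v,u)+h(u,v)\bigr)+h(u,u).
\]
Your justification ``$\mu=\lambda\circ d$ kills $d(\i\cdot\j)=0$'' correctly disposes of the first summand (since $xv-v\in\i V$ and $u\in\j V$), but it does nothing for the remaining two: $h(v,u)+h(u,v)\in\j$ and $h(u,u)\in\j^2$, and neither ideal is zero in general. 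What the paper actually uses here is the symmetry of $h$ together with the specific choice of $d$: the element $h(v,u)+h(u,v)$ and the element $h(u,u)$ automatically lie in $R$ when $h$ is hermitian and in $S$ when $h$ is skew-hermitian; and by construction $d$ annihilates $R$ in the ramified even (hermitian) case and annihilates $S$ in the symplectic, unramified and ramified odd (skew-hermitian) cases, so $\mu$ of these terms is $1$. Once you replace the appeal to $\i\j=(0)$ by this observation, your proof is complete and matches the paper's. (Note also that your congruence ``$h(gv,v)\equiv h(v,v)\bmod\j$'', while true, is not the relevant statement; what is needed is $\alpha_{gv}=\alpha_v$, i.e.\ a comparison of $h(x\cdot gv,gv)$ with $h(xv,v)$ for all $x\in\Omega(\i)$.)
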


\begin{proof} Since $N$ is a central subgroup of $\U(V)$, it is clear that $N$ stabilizes $\alpha_v$. We claim that
$G_v$ stabilizes $\alpha_v$. Indeed, note first of all that for $x \in \U(V)$ we have ${}^x \alpha_v=\al_{xv}$.
Let $x\in G_v$ and $g\in\Omega(\i)$. Then $xv=v+u$ for some $u\in U^\perp$, so
$$ ^{x}\alpha_v(g)= \alpha_{xv}(g)=\alpha_{v+u}(g) = \mu(h(g(v+u),(v+u)).$$
Since $\Omega(\i)$ acts trivially on $U^\perp$, we have $gu=u$. Moreover, since $h(U,U^\perp)=0$, it follows that $h(gv,u)=h(v,u)$.
Hence
$$^{x}\alpha_v(g)=\mu(h(gv,v))\mu(h(v,u)+h(u,v))\mu(h(u,u)).$$
Our definition of $\mu$ readily implies that $\mu(h(v,u)+h(u,v))=1$ and $\mu(h(u,u))=1$, so $^{x}\alpha_v(g)=\alpha_v(g)$,
as claimed.

Suppose, conversely, that $x\in\U(V)$ stabilizes $\alpha_v$. Then $\al _{xv} = \al _v $. It follows from Proposition \ref{alv=alw} that $xv=zv + u_0$ for some $z \in N$ and $u_0 \in U^\perp$. Therefore $z^{-1} x \in G_v$, so $x \in NG_v$.
\end{proof}


\begin{lemma}\label{imp} Let $v,w\in\P$. Suppose that $\alpha_v|_{N\cap (1+i)}=\alpha_w|_{N\cap (1+i)}$.
Then $v,w$ are in the same $\U(V)$-orbit modulo $U^\perp$.
\end{lemma}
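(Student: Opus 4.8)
The plan is to first evaluate $\alpha_v$ on $N\cap(1+\i)$, deduce from the hypothesis a congruence between $h(v,v)$ and $h(w,w)$, and then invoke a Witt type transitivity theorem modulo $\j V$. By Lemma \ref{nis} we have $N\cap(1+\i)=1+(S\cap\i)$; for $s\in S\cap\i$ the element $1+s$ lies in $\Omega(\i)$ and acts on $V$ as multiplication by $1+s$, so, since $f$ is alternating and $f=d\circ h$, $\alpha_v(1+s)=\lambda(f((1+s)v,v))=\lambda(f(sv,v))=\mu(s\,h(v,v))$, and likewise for $w$. Hence the hypothesis $\alpha_v|_{N\cap(1+\i)}=\alpha_w|_{N\cap(1+\i)}$ says exactly that $\mu(s\,x)=1$ for every $s\in S\cap\i$, where $x=h(v,v)-h(w,w)$.

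Next I would deduce that $x\in\j$, i.e.\ $h(v,v)\equiv h(w,w)\bmod\j$. Note $x^*=\varepsilon x$, so $x\in S$ in the symplectic, unramified and ramified odd cases and $x\in R$ in the ramified even case. From the definition of $d$ one checks that $\mu$ is a faithful character of $A^+$, that is, $\ker\mu$ contains no nonzero ideal of $A$ (this reduces to the primitivity of $\lambda$ on $R^+$). Since $\i b$ is an ideal of $A$ for every $b\in A$, it follows that $\mu(\i b)=\{1\}$ forces $\i b=0$, i.e.\ $b\in\j$. Finally, because $d$ vanishes on $R$ in the ramified even case and on $S$ in the remaining cases, the relation $x^*=\varepsilon x$ gives $\mu(sx)=1$ automatically for $s\in R\cap\i$; hence $\mu(sx)=1$ for all $s\in S\cap\i$ is equivalent to $\mu(ax)=1$ for all $a\in\i$, which yields $x\in\j$. (One could instead argue by a case by case computation of annihilators inside $R$, using that $\lambda$ is primitive.)

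Finally I would pass to $\overline A=A/\j$, a proper quotient chain ring of $A$, and to the free $\overline A$-module $\overline V=V/\j V$ of rank $m$, equipped with the nondegenerate hermitian (resp.\ skew hermitian) form $\overline h$ induced by $h$. The images $\overline v,\overline w$ are primitive in $\overline V$, and by the previous step $\overline h(\overline v,\overline v)=\overline h(\overline w,\overline w)$. By Witt's theorem for such forms over a finite chain ring (see \cite{CS,CHQS}) there is $\overline g\in\U_m(\overline A)$ with $\overline g\,\overline v=\overline w$; lifting $\overline g$ through the reduction epimorphism $\U_m(A)\to\U_m(\overline A)$, whose kernel is $\Omega(\j)$, produces $g\in\U(V)$ with $gv-w\in\j V$. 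Since $\j V=U^\perp$ by Lemmas \ref{perp} and \ref{ij}, the vectors $v$ and $w$ lie in the same $\U(V)$-orbit modulo $U^\perp$, as required.

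The step I expect to be the main obstacle is the appeal to Witt's theorem over $\overline A$: transitivity of $\U_m(\overline A)$ on primitive vectors taking a prescribed value of the form is classical over fields, but over a chain ring it is obtained by reduction to the residue field followed by successive approximation, and the anisotropic rank $2$ hermitian case — which has no isotropic vectors, hence no transvections or Eichler transformations — must be handled separately; transitivity on fixed value primitive vectors nonetheless persists there. The computations in the first two paragraphs are routine.
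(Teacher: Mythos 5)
Your proposal is correct and follows the same three-step outline as the paper's proof: evaluate $\alpha_v$ on $N\cap(1+\i)=1+(S\cap\i)$ to get $\mu\bigl(s(h(v,v)-h(w,w))\bigr)=1$, deduce $h(v,v)\equiv h(w,w)\bmod\j$, and conclude via the orbit classification of \cite{CS} and \cite{CHQS} together with surjectivity of $\U_m(A)\to\U_m(A/\j)$. The only divergence is in the middle step, where your observation that $\mu(rx)=1$ is automatic for $r\in R\cap\i$ (so that $\i x$ is an ideal of $A$ inside $\ker\mu$, forcing $\i x=0$) uniformly replaces the paper's case-by-case choice of $z=\pi^i\theta$, $\pi^i$ or $\pi^{i+1}$ and its parity argument in the ramified cases; this shortcut is valid and arguably cleaner.
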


\begin{proof} In the symplectic case any two primitive vectors of $V$ are in the same $\Sp(V)$-orbit,
so the result is obvious in this case. We next deal with the other three cases.

By \cite[Theorem 3.1]{CS} and \cite[Theorem 4.1]{CHQS}, $v,w$ are in the same $\U(V)$-orbit modulo $U^\perp$ if and only if
$h(v,v)\cong h(w,w)\mod \j$.

Now, the very definition of $\mu$ yields
\begin{equation}
\label{df}
\mu(h(v,v))=\mu(h(w,w)).
\end{equation}
Let $z\in S\cap\i$. Then $1+z\in N\cap(1+\i)$ by Lemma \ref{nis}. Our hypothesis $\alpha_v|_{N\cap (1+i)}=\alpha_w|_{N\cap (1+i)}$ together
with (\ref{df}) yield
$$
\mu(h(zv,v))=\mu(h(zw,w)).
$$
Since $z^*=-z$, this is equivalent to
$$
\mu(z(h(v,v)-h(w,w)))=1.
$$

In the unramified or ramified odd cases, $h(v,v) - h(w,w) \in S$, so $z(h(v,v) - h(w,w)) \in R$. Thus $Rz(h(v,v) - h(w,w))$ is an ideal
of $R$ contained in the kernel of $\lambda$. Since $\lambda$ is primitive, we infer
\begin{equation}
\label{df2}
z(h(v,v) - h(w,w))=0.
\end{equation}

In the ramified even case, we have $h(v,v) - h(w,w) \in R$, and therefore $z(h(v,v) - h(w,w))\in S$. Thus $d(Rz(h(v,v) - h(w,w))) $ is an ideal of~$R$
contained in kernel of $\lambda$, so $d(z(h(v,v) - h(w,w))) =0$, whence
\begin{equation}
\label{df3}
z(h(v,v) - h(w,w)) \in R \cap S =(0).
\end{equation}

We have $\i = A\pi^i$ and $\j = A\pi^j$. In the unramified case, we take $z=\pi^i\theta\in S\cap\i$ in (\ref{df2}) and deduce
$h(v,v) - h(w,w)\in \mathrm{Ann}_A \pi^i=\j$. Suppose henceforth $*$ is ramified.

In the ramified odd (resp. even) case, assume first $i$ is odd. We take $z=\pi^i\in S\cap\i$ in (\ref{df2}) (resp. (\ref{df3}))
and deduce, as before, that $h(v,v) - h(w,w)\in \mathrm{Ann}_A \pi^i=\j$. Assume next $i$ is even, in which case $j$ is odd (resp. even).
Taking $z=\pi^{i+1}\in S\cap\i$ in (\ref{df2}) (resp. (\ref{df3})), we infer
$h(v,v) - h(w,w)\in \mathrm{Ann}_A \pi^{i+1}=A\pi^{j-1}$. But $h(v,v) - h(w,w)$ is in $S$ (resp. in $R$) and $j-1$ is even (resp. odd), so $h(v,v) - h(w,w)$ is in $A\pi^{j-1}\cap S=A\pi^{j}\cap S$ (resp. in $A\pi^{j-1}\cap R=A\pi^{j}\cap R$).
\end{proof}

\begin{lemma}\label{bot} $Bot$ is the sum of all $\ind_{G_t}^{\U(V)} Z_t$ such that $t\in T$ but $t\notin \P$.
\end{lemma}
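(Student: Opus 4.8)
The plan is to pass to the Heisenberg picture of $X$ and show that there the subgroup $(0,\min V)$ acts by scalars on each of the natural summands, so that $Bot$ becomes visibly a sum of Mackey summands. Recall from the excerpt (via \cite[Proposition 3.3]{CMS2}) that $X|_{H(V)}\cong\ind_{H(U^\perp)}^{H(V)}Z$; picking coset representatives $(0,\tilde t)$ of $H(U^\perp)$ in $H(V)$ indexed by $\bar t\in V/U^\perp$ gives a vector space decomposition
$$
X=\bigoplus_{\bar t\in V/U^\perp}(0,\tilde t)\otimes Z .
$$
Tracing through the Mackey construction of \cite[\S 4]{CMS2} that produced $X\cong\bigoplus_{t\in T}\ind_{G_t}^{\U(V)}Z_t$, each summand $\ind_{G_t}^{\U(V)}Z_t$ is precisely the span of those $(0,\tilde s)\otimes Z$ with $\bar s$ in the $\U(V)$-orbit of $\bar t$. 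Hence it is enough to decide, for each $\bar t$, whether $(0,\tilde t)\otimes Z\subseteq Bot$.

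The first key point is that $\min V\subseteq U$. Writing $\i=A\pi^{i}$, the nonvanishing of $\i$ forces $i\le e-1$, where $e$ denotes the nilpotency degree of $\pi$; since $\min=A\pi^{e-1}$, this gives $\min\subseteq\i$, and so $\min V\subseteq U=\i V\subseteq U^\perp$. Therefore, for $v\in\min V$ the element $(0,v)$ of $H(V)$ fixes each coset $\bar t$ and acts on $(0,\tilde t)\otimes Z$ by $(0,\tilde t)z\mapsto(0,\tilde t)(r_0,v)z$, where $(r_0,v)\in H(U^\perp)$ and, by the multiplication rule of $H(V)$ together with the fact that $f$ is alternating, $r_0=-2f(\tilde t,v)\in R$. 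Since $S'$ is trivial on $(0,U)$ and restricts to $\lambda$ on $(R,0)$, this action is multiplication by $\lambda\bigl(-2f(\tilde t,v)\bigr)$. Thus $(0,\min V)$ acts diagonally for the displayed decomposition, and its fixed space is
$$
Bot=\bigoplus_{\bar t\,:\,f(\tilde t,\min V)\subseteq\ker\lambda}(0,\tilde t)\otimes Z .
$$
As $2$ is a unit and $f(\tilde t,\min V)$ is an ideal of $R$, primitivity of $\lambda$ converts the condition $f(\tilde t,\min V)\subseteq\ker\lambda$ into $f(\tilde t,\min V)=(0)$, i.e. $\tilde t\in(\min V)^\perp$.

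It remains to identify $(\min V)^\perp$, and I would do this exactly as in the proof of Lemma \ref{ij}. The inclusion $\r V\subseteq(\min V)^\perp$ is immediate, since for $v\in\r V$ and $u\in\min V$ one has $h(v,u)\in\r\min=(0)$ (every ideal being $*$-invariant). On the other hand, by \cite[Lemma 2.1]{CMS2}, $|\min V|\cdot|(\min V)^\perp|=|V|$, and since $\min$ is the minimal ideal one has $|\min V|=|V/\r V|$, whence $|(\min V)^\perp|=|V|/|\min V|=|\r V|$. Therefore $(\min V)^\perp=\r V$, so $(0,\tilde t)\otimes Z\subseteq Bot$ if and only if $\tilde t\in\r V$, that is, $\bar t$ is not primitive --- a condition well defined on $V/U^\perp$ because $U^\perp=\j V\subseteq\r V$, and constant on $\U(V)$-orbits because $\U(V)$ preserves $\r V$. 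Grouping the coset summands back into $\U(V)$-orbits then gives
$$
Bot=\bigoplus_{t\in T\setminus\P}\ind_{G_t}^{\U(V)}Z_t ,
$$
which is the assertion.

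The step I expect to require the most care is the first paragraph: one must make precise how the $\U(V)$-modules $\ind_{G_t}^{\U(V)}Z_t$ sit inside the Heisenberg-induced module $\bigoplus_{\bar t}(0,\tilde t)\otimes Z$ and verify that $(0,\min V)$ respects this finer grading (which uses $\min V\subseteq U^\perp$) and in fact acts there by scalars (which uses the sharper $\min V\subseteq U$). After that, the evaluation of the scalars and the identification $(\min V)^\perp=\r V$ are routine.
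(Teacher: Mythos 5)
Your argument is correct, and its first half is exactly the paper's: both proofs use the decomposition $X=\bigoplus_{v}(0,v)Z$ over a transversal of $U^\perp$ in $V$, identify each Mackey summand $\ind_{G_t}^{\U(V)}Z_t$ with the span of the Heisenberg summands over one $\U(V)$-orbit, and observe that $(0,\min V)$ preserves each $(0,\tilde t)\otimes Z$ and acts there by scalars (the paper only needs $\min\r=(0)$ to see the scalar is $1$ when $\tilde t\in\r V$). The two proofs diverge in how they rule out the primitive summands. The paper stops after the inclusion $\bigoplus_{t\in T\setminus\P}\ind_{G_t}^{\U(V)}Z_t\subseteq Bot$ and closes by a dimension count, taking the dimension $\sqrt{|\r V|/|\min V|}$ of $Bot$ as known from the cited prior work describing $Bot$ as a smaller Weil module. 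You instead compute the scalar $\lambda(-2f(\tilde t,v))$ on \emph{every} summand and show it is nontrivial for primitive $\tilde t$, using that $f(\tilde t,\min V)$ is an ideal of $R$, the primitivity of $\lambda$, and the identification $(\min V)^\perp=\r V$ (which you establish by the same counting device as Lemma \ref{ij}). Your route is self-contained --- it does not presuppose the dimension of $Bot$ and in fact re-derives it --- at the cost of the extra perpendicular computation; the paper's route is shorter given the external input. Both are sound; the only points needing the care you flagged are $\min V\subseteq U$ (so the action is scalar, not just block-preserving) and the fact that $f(\tilde t,\cdot)$ is $R$-linear, so its image on $\min V$ is genuinely an ideal to which primitivity of $\lambda$ applies.
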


\begin{proof} Recall from the Introduction the induced $H(V)\rtimes \U(V)$-module
$$
X=\ind_{H(U^\perp)\rtimes \U(V)}^{H(V)\rtimes \U(V)} Z.
$$
Let $M$ a transversal for $U^\perp$ in $V$ containing $T$. Then the set of all $(0,v)$, with $v\in V$,
is a transversal for $H(U^\perp)\rtimes \U(V)$ in $H(V)\rtimes \U(V)$, so we have
the following vector space decomposition,
$$
X=\underset{v\in M}\bigoplus (0,v) Z,
$$
where each summand is an $H(V)$-submodule of $X$. Given $t\in T$, the sum of all $(0,s) Z$, where $s$ runs through
the $\U(V)$-orbit of $t$, is a copy $\ind_{G_t}^{\U(V)}Z_t$ inside of $X$.

We first show that $(0,\min V)$ fixes every point of the sum of all $\ind_{G_t}^{\U(V)} Z_t$, $t\in T\cap\r V$, which means that
$(0,\min V)$ fixes every element of $(0,v) Z$ with $v\in M\cap\r V$. Now, if $u\in \min V$ and $v\in M\cap\r V$ then $(0,u)(0,v)=(0,v)(0,u)$ since $\min\r=0$ and $(0,u) z=z$ for every $z\in Z$, since the
subgroup $(0,\min)\subseteq (0,U)$ of $H(U^\perp)$ acts trivially on the Schr${\rm\ddot{o}}$dinger module of $H(U^\perp)$ of type $\lambda$.
This shows that
$$
\underset{t\in T\cap \r V}\bigoplus \ind_{G_t}^{\U(V)} Z_t=\underset{v\in M\cap \r V}\bigoplus (0,v) Z\subseteq Bot.
$$
The left hand side has dimension
$$
|\r V|/|\j V|\times \sqrt{|\j V|/|\i V|}=|\r V|/\sqrt{|\j V|\times |\i V|}=|\r V|/\sqrt{|V|},
$$
and the right hand side has dimension
$$
\sqrt{|\r V|/|\min V|}.
$$
These are the same, since $|\r V||\min V|=|V|$.
\end{proof}

\section{Counting orbits, linear characters and dimensions}\label{counting}

\begin{lemma}\label{lincar} We have $|T\cap\P|=|N\cap (1+\i)|$, except when $h$ is hermitian and non isotropic, when $|T\cap\P|=(1-1/q)|N\cap (1+\i)|$.
\end{lemma}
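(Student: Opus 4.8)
The plan is to identify $|T\cap\P|$ with a count of values of $h$ on the diagonal. As in the proof of Lemma~\ref{imp}, the map $v+U^\perp\mapsto h(v,v)+\j$ is well defined on $V/U^\perp$ (if $u\in U^\perp=\j V$ then $h(v,u),h(u,v),h(u,u)\in\j$), and by \cite[Theorem~3.1]{CS} and \cite[Theorem~4.1]{CHQS} two primitive vectors lie in the same $\U(V)$-orbit modulo $U^\perp$ exactly when their images coincide. Hence $|T\cap\P|=|Q|$, where $Q=\{h(v,v)+\j\,:\,v\in\P\}$. Observe that $h(v,v)$ lies in $S$ when $h$ is skew hermitian (the symplectic, unramified and ramified odd cases) and in $R$ when $h$ is hermitian (the ramified even case).

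The second ingredient is a pair of cardinality identities. The $R$-bilinear form $(a,b)\mapsto d(ab)$ on $A$ is perfect — its radical would be a nonzero ideal on which $d$ vanishes, contradicting the choice of $d$ (cf.\ Corollary~\ref{nondeg}) — and $\j=\mathrm{Ann}_A(\i)$. Decomposing along $A=R\oplus S$: in the skew hermitian cases $d$ is the projection onto $R$, so the above form is the orthogonal sum of the multiplication pairings on $R$ and on $S$, whence $S\cap\j$ is the annihilator of $S\cap\i$ inside $S$ and $|S/(S\cap\j)|=|S\cap\i|$; in the ramified even case $d$ pairs $R$ against $S$, so $R\cap\j$ is the annihilator of $S\cap\i$ under that pairing and $|R/(R\cap\j)|=|S\cap\i|$. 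In every case $|S\cap\i|=|N\cap(1+\i)|$ by Lemma~\ref{nis}. (One can also verify these identities directly from $\i=A\pi^i$, $\j=A\pi^{j}$ and the explicit description of $S\cap\i$ as a power of $\m$ times a power of $\pi$.)

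If $h$ is isotropic — which happens in the symplectic, unramified and ramified odd cases (as $m\geq 2$), and in the ramified even case whenever $m\geq 3$ or $m=2$ with $h$ of isotropic type — I would take a hyperbolic pair $u,u'$. The plane $Au\oplus Au'$ has a free orthogonal complement, so $u+cu'$ is primitive for every $c\in A$ and $h(u+cu',u+cu')=c+\varepsilon c^*$; as $c$ ranges over $A=R\oplus S$ this ranges over all of $S$ in the skew hermitian case and all of $R$ in the hermitian case. Thus $Q=S/(S\cap\j)$, resp.\ $Q=R/(R\cap\j)$, and $|T\cap\P|=|Q|=|N\cap(1+\i)|$.

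The remaining case — $m=2$, ramified even, $h$ anisotropic hermitian — is the real obstacle, because there $Q$ is a proper subset and one must pin down exactly which residues occur. I would diagonalize $h$ as $h(x_1e_1+x_2e_2,x_1e_1+x_2e_2)=a_1x_1x_1^*+a_2x_2x_2^*$ with $a_1,a_2\in R^\times$, noting $xx^*=r^2-ps^2$ for $x=r+s\pi$ and that anisotropy is equivalent to $-a_1a_2^{-1}\notin R^{\times 2}$. Then: (i) $h(v,v)\in R^\times$ for every primitive $v$, since one coordinate of $v$ is a unit and, reducing mod $\m$, the failure of $h(v,v)$ to be a unit would force $-a_1a_2^{-1}\in F_q^{\times 2}$; and (ii) $\{h(v,v):v\in\P\}$ is a union of cosets of $\{cc^*:c\in A^\times\}=R^{\times 2}$ in $R^\times$ (here $1+\m\subseteq R^{\times 2}$), which meets both cosets, because $a_2=h(e_2,e_2)$ and $a_1+a_2cc^*=h(e_1+ce_2,e_1+ce_2)$ lie in different cosets whenever $\overline{a_1a_2^{-1}}+\overline{cc^*}$ is a nonzero nonsquare in $F_q$ — and such a unit $c$ exists since $\overline{a_1a_2^{-1}}+F_q^{\times 2}\neq F_q^{\times 2}$ (summing over $F_q^{\times 2}$ would give $\tfrac{q-1}{2}\,\overline{a_1a_2^{-1}}=0$, impossible as $p\nmid\tfrac{q-1}{2}$), the resulting value being nonzero by anisotropy. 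Hence $\{h(v,v):v\in\P\}=R^\times$, so $Q$ is the image of $R^\times$ in $A/\j$, of cardinality $|R^\times|/|R\cap\j|=(1-1/q)\,|R/(R\cap\j)|=(1-1/q)\,|N\cap(1+\i)|$. Once the two cardinality identities and the hyperbolic-pair computation are set up, everything else is formal; the anisotropic analysis is the only place where genuine care is needed.
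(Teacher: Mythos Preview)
Your proof is correct and follows the same overall strategy as the paper—reduce $|T\cap\P|$ to the cardinality of the set $Q=\{h(v,v)+\j:v\in\P\}$ via the orbit classification, then match this against $|N\cap(1+\i)|=|S\cap\i|$—but the implementation differs in two places.

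For the cardinality identities $|S/(S\cap\j)|=|S\cap\i|$ (skew case) and $|R/(R\cap\j)|=|S\cap\i|$ (hermitian case), the paper does an explicit case-by-case count: unramified via the bijection $r\mapsto r\theta$; ramified odd via the formula $|S\cap A\pi^k|=q^{\ell-1-\lfloor k/2\rfloor}$ with a parity split on $i$; ramified even again splitting on the parity of $i$. Your perfect-pairing argument via $(a,b)\mapsto d(ab)$ replaces all of this by one uniform step: the pairing is nondegenerate because $d$ is nonzero on the minimal ideal, and since it decomposes as $R\perp S$ (skew case) or pairs $R$ against $S$ (hermitian case), the identity $\i^{\perp_A}=\j$ yields $S\cap\j=(S\cap\i)^{\perp_S}$ resp.\ $R\cap\j=(S\cap\i)^{\perp_R}$ at once. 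This is cleaner and explains \emph{why} the identities hold.

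For the determination of $Q$, the paper simply invokes \cite[Proposition~2.12]{CS}, \cite[Theorem~3.5, Lemma~3.7]{CHQS} to say that the values $\overline{h}(\overline{u},\overline{u})$ on basis vectors are all skew hermitian (resp.\ hermitian, resp.\ hermitian unit) elements of $A/\j$. You instead give self-contained arguments: the hyperbolic-pair computation $h(u+cu',u+cu')=c+\varepsilon c^*$ in the isotropic case, and a direct finite-field analysis in the anisotropic case showing $\{h(v,v):v\in\P\}=R^\times$. Your anisotropic argument is correct (the key step $\alpha+F_q^{\times 2}\neq F_q^{\times 2}$ works because $p\nmid(q-1)/2$), though citing \cite[Lemma~3.7]{CHQS} as the paper does would be shorter. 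The trade-off is self-containment versus brevity.
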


\begin{proof}  Let $\overline{A}=A/\j$. Then $\overline{V}=V/\j V$ is a free $\overline{A}$-module of rank $m$, endowed with a non degenerate form $\overline{h}$ inherited from $h$.
As indicated in the Introduction, the canonical homomorphism $\U(V)\to\U(\overline{V})$ is surjective. Therefore, $|T\cap\P|$ is equal to the number of $\U(\overline{V})$-orbits
of basis vectors from $\overline{V}$. By \cite[Theorem 4.1]{CHQS} and \cite[Theorem 3.1]{CS},
two basis vectors $\overline{u},\overline{v}\in \overline{V}$ are in the same $\U(\overline{V})$-orbit if and only if $\overline{h}(\overline{u},\overline{u})=\overline{h}(\overline{v},\overline{v})$.

If $*$ is trivial, the only value $\overline{h}(\overline{u},\overline{u})$, with $\overline{u}$ a basis vector, is $\overline{0}$ and
$N\cap (1+\i)=\{\pm 1\}\cap (1+\i)=\{1\}$, and the result follows in this case.

If $*$ is nontrivial, the values $\overline{h}(\overline{u},\overline{u})$, with $\overline{u}$ a basis vector,
are the skew hermitian elements of $\overline{A}$ if $h$ is skew hermitian (this easily follows from \cite[Proposition 2.12]{CS} in
the ramified case and \cite[Theorem 3.5]{CHQS} in the unramified case), the hermitian elements of $\overline{A}$ if $h$ is hermitian
and isotropic (see \cite[Lemma 3.7]{CHQS}), and the hermitian units of $\overline{A}$ if $h$ is hermitian
and non isotropic, in which case $m=2$ (see \cite[Lemma 3.7]{CHQS}).

Thus, we are reduced to showing that the number of skew hermitian elements of $\overline{A}$ is equal to $|N\cap (1+\i)|$ in the unramified
and ramified odd cases, and that the number of hermitian elements (resp. hermitian units) of $\overline{A}$ is equal to $|N\cap (1+\i)|$ (resp. $(1-1/q)|N\cap (1+\i)|$) in the ramified even case.

Now by Lemma \ref{nis}, we have
$$
|N\cap (1+\i)|=|S\cap\i|.
$$
On the other hand, the additive group of all skew hermitian elements of $A/\j$ is $(S+\j)/\j\cong S/S\cap \j$ and the ring of hermitian elements
of $A/\j$ is $(R+\j)/\j\cong R/R\cap \j$. Thus, we are reduced to showing that
$$
|S|=|S\cap\j|\times|S\cap\i|
$$
in the unramified and ramified odd case, and
$$
|R|=|R\cap\j|\times|S\cap\i|,
$$
$$
|(R/R\cap \j)^\times|=(1-1/q)|S\cap\i|
$$
in the ramified even case.

Suppose first $*$ is unramified. Then $A=R\oplus R\theta$, where $\theta^*=-\theta$ is a unit. For every ideal $\k$ of $A$ we have a bijection $R\cap\k\to S\cap\k$, given
by $r\mapsto r\theta$, so $|S\cap \k|=|R\cap\k|$. Moreover, since every ideal of $A$ is of the form $p^i A$, where $p$ is a generator of
the maximal ideal $\m$ of $R$, we see that the annihilator of $R\cap\i$ in $R$ is precisely $R\cap\j$. Therefore,
$$
|S|=|R|=|R\cap\j|\times|R\cap\i|=|S\cap\j|\times|S\cap\i|.
$$

Suppose next that $*$ is ramified of odd type. It is easy to see that, for $0\leq k\leq 2(\ell-1)$, we have
$$
|S\cap A\pi^k|=\begin{cases}
q^{\ell-1-k/2}\text{ if }k\text{ is even},\\
q^{\ell-1-(k-1)/2}\text{ if }k\text{ is odd}.
\end{cases}
$$
Now $\i=A\pi^i$, $\j=A\pi^j$, where $i+j=2\ell-1$. In particular, exactly one element of $\{i,j\}$ is even. Using the above formula for $S\cap A\pi^k$, we readily find that
$$
|S\cap\j|\times|S\cap\i|=q^{\ell-1}=|S|.
$$

Suppose finally that $*$ is ramified of even type. We have $\i=A\pi^i$, $\j=A\pi^j$, where $i+j=2\ell$. Assume first $i$ is even. Then $j$ is also even and
$$
\i=Ap^{i/2}=R p^{i/2}\oplus R p^{i/2}\pi,\quad \j=Ap^{j/2}=R p^{j/2}\oplus R p^{j/2}\pi.
$$
Since $\{1,\pi\}$ is an $R$-basis of $A$, we infer
$$
|S\cap\i|=|R\cap\i|.
$$
Moreover, $R\cap\j=R p^{j/2}$ is the annihilator of $R\cap\i=R p^{i/2}$ in $R$. Therefore,
$$
|R|=|R\cap\j|\times|R\cap\i|=|R\cap\j|\times|S\cap\i|.
$$
Assume finally that $i=2s+1$ is odd. Then $R\cap\j=Rp^{\ell-s}$ and, since $\{1,\pi\}$ is an $R$-basis of $A$, we find that $|S\cap\i|=|Rp^s|$.
Hence
$$
|R|=|Rp^{\ell-s}|\times|Rp^s|=|R\cap\j|\times|S\cap\i|.
$$
This proves $|R|=|R\cap\j|\times|S\cap\i|$, regardless of the parity of $i$. Moreover, we have a canonical group epimorphism $R^\times\to (R/R\cap\j)^\times$ with kernel $1+R\cap\j$, whence
$$
|(R/R\cap\j)^\times|=|R^\times/1+R\cap\j|=(|R|-|\m|)/|R\cap\j|=(1-1/q)\times |R/R\cap\j|.
$$
This proves that
$$
|(R/R\cap\j)^\times|=(1-1/q)\times |S\cap\i|.
$$
\end{proof}

\begin{cor}\label{co} We have $\N=\widehat{N}$, except when $h$ is hermitian and non isotropic, when $\N$ is a proper subset of $\widehat{N}$ of cardinality $(1-1/q)|N|$.
\end{cor}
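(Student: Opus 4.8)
The plan is to recast the statement as a single counting identity and then feed it Lemma~\ref{lincar}. First recall that $N$, being a subgroup of the unit group of the commutative ring $A$, is a finite abelian group, and that $N\cap(1+\i)=1+(S\cap\i)$ (Lemma~\ref{nis}) is a subgroup of $\Omega(\i)$: each $1+z$ with $z\in S\cap\i$ sends any $v\in V$ to $v+zv\in v+\i V$. Thus for every $t\in T\cap\P$ the restriction $\alpha_t|_{N\cap(1+\i)}$ is a genuine linear character of $N\cap(1+\i)$, and by the very definition of $\N$ we have
$$
\N=\underset{t\in T\cap\P}\bigcup E_t,\qquad
E_t=\{\phi\in\widehat{N}\,:\,\phi|_{N\cap(1+\i)}=\alpha_t|_{N\cap(1+\i)}\}.
$$

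Next I would show this union is disjoint. Suppose $t,t'\in T\cap\P$ and $E_t\cap E_{t'}\neq\emptyset$. Then $\alpha_t|_{N\cap(1+\i)}=\alpha_{t'}|_{N\cap(1+\i)}$, so Lemma~\ref{imp} places $t$ and $t'$ in the same $\U(V)$-orbit modulo $U^\perp$; since $T$ is a transversal for the $\U(V)$-orbits of $V/U^\perp$, this forces $t=t'$. Hence the sets $E_t$, $t\in T\cap\P$, are pairwise disjoint and $|\N|=\sum_{t\in T\cap\P}|E_t|$. Now count each $E_t$: because $N$ is a finite abelian group, every linear character of the subgroup $N\cap(1+\i)$ extends, and does so to exactly $[N:N\cap(1+\i)]$ linear characters of $N$; in particular $|E_t|=[N:N\cap(1+\i)]$ for every $t$. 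Therefore
$$
|\N|=|T\cap\P|\cdot[N:N\cap(1+\i)].
$$

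It remains to plug in Lemma~\ref{lincar}. In the generic case $|T\cap\P|=|N\cap(1+\i)|$, so $|\N|=|N|=|\widehat{N}|$; combined with $\N\subseteq\widehat{N}$ this yields $\N=\widehat{N}$. When $h$ is hermitian and non isotropic, $|T\cap\P|=(1-1/q)|N\cap(1+\i)|$, whence $|\N|=(1-1/q)|N|<|N|=|\widehat{N}|$, so $\N$ is a proper subset of $\widehat{N}$ of the stated cardinality. The one point needing care is the disjointness step, which rests entirely on Lemma~\ref{imp} guaranteeing that $t\mapsto\alpha_t|_{N\cap(1+\i)}$ separates the orbit representatives in $T\cap\P$; once this injectivity is in hand the corollary reduces to the elementary extension count above, the substantive work having already been done in Lemma~\ref{lincar}.
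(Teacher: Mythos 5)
Your proof is correct and follows essentially the same route as the paper, whose entire argument is the counting identity $|\N|=|T\cap\P|\cdot[N:N\cap(1+\i)]$ (obtained from Lemma~\ref{imp} plus the standard extension count for characters of a subgroup of a finite abelian group) combined with Lemma~\ref{lincar}. You have simply written out the disjoint-union decomposition and the extension count that the paper leaves implicit.
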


\begin{proof} By Lemma \ref{imp}, we have
$$
|\N|=|T\cap\P|\times |N/N\cap (1+\i)|.
$$
The result now follows from Lemma \ref{lincar}.
\end{proof}






\begin{lemma}\label{dim} Suppose $h$ is isotropic. Let $\phi\in\N$ and let $t\in\P\cap T$ be the only element such that $\phi$ extends $\alpha_t|_{N\cap (1+\i)}$.
Then
$$
\dim Z_t(\phi)=\frac{\sqrt{|U^\perp/U|}}{|N\cap (1+\j)/N\cap (1+\i)|}.
$$
\end{lemma}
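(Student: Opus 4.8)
The plan is to extract the dimension from the decomposition of $Z_t$ under the central subgroup $N\cap(1+\j)$ of $\U(V)$. Recall that $\dim Z_t=\dim Z=\sqrt{|U^\perp/U|}$ because $Z$ is a Schr\"odinger module for $H(U^\perp)$. Since $g\in 1+\j$ acts on $V$ as a scalar, $gt-t\in\j V=U^\perp$, so $N\cap(1+\j)\subseteq G_t$ and $N\cap(1+\j)$ acts on $Z_t$ through $W_t$; being abelian, it splits $Z_t$ into character eigenspaces. Now $N\cap(1+\i)\subseteq\Omega(\i)$ acts on $Z_t$ through the scalar character $\alpha_t$ of (\ref{formu2}), so only the characters of $N\cap(1+\j)$ restricting to $\alpha_t|_{N\cap(1+\i)}$ can occur, and there are exactly $|N\cap(1+\j)/N\cap(1+\i)|$ such; writing $Z_t=\bigoplus_\psi Z_t(\psi)$ for the resulting decomposition, $Z_t(\phi)$ is one of the summands and $\sum_\psi\dim Z_t(\psi)=\sqrt{|U^\perp/U|}$. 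Thus the Lemma reduces to showing that every $\psi$ extending $\alpha_t|_{N\cap(1+\i)}$ actually occurs and that all the $Z_t(\psi)$ have the same dimension.

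To prove this I would work in $\GL(Z_t)$ with the group $\mathcal G$ generated by $S'(H(U^\perp))$ and $W_t(N\cap(1+\j))$, which acts irreducibly on $Z_t$ since $S'$ already does, and with the abelian subgroup $\Lambda$ generated by $W_t(N\cap(1+\j))$ and the scalars. Using the commutator relations in $H(U^\perp)$ together with $W'(g)S'(k)W'(g)^{-1}=S'({}^{g}k)$ one computes, for $g\in N\cap(1+\j)$ and $k=(0,u)$ with $u\in U^\perp$,
\[
S'(k)\,W_t(g)\,S'(k)^{-1}=\lambda\bigl(2f(u,gt-t)\bigr)\,W_t(g)\,S'(k'),
\]
where $k'\in H(U^\perp)$ has trivial $R$-component and $U^\perp/U$-component lying in $(g-1)(U^\perp/U)$. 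In particular $k'$ vanishes for all $g\in N\cap(1+\j)$ exactly when $\j^2\subseteq\i$; in that case $\mathcal G$ normalises $\Lambda$, conjugation by $S'((0,u))$ sends the eigencharacter $\psi$ to $\psi\chi_u$ with $\chi_u(g)=\lambda(2f(u,gt-t))$, and $u\mapsto\chi_u$ surjects onto the dual of $N\cap(1+\j)/N\cap(1+\i)$: if $\chi_u$ were trivial for every $u\in U^\perp$ then $f(\,\cdot\,,gt-t)$ would vanish on $U^\perp$, so $gt-t$ would lie in the radical of $f|_{U^\perp}$, which is $U=\i V$ because $f$ induces a nondegenerate form on $U^\perp/U$, whence $g\in N\cap(1+\i)$. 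By Clifford theory $\mathcal G$ permutes the $\Lambda$-isotypic components of the irreducible module $Z_t$ transitively, and since these conjugations realise every twist $\psi\mapsto\psi\chi_u$, starting from any component that occurs we reach all $\psi$ above $\alpha_t|_{N\cap(1+\i)}$; hence they all occur and are equidimensional, and the asserted formula follows from $\sum_\psi\dim Z_t(\psi)=\sqrt{|U^\perp/U|}$.

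The case $\j^2\not\subseteq\i$ is the genuine difficulty: some $g\in N\cap(1+\j)$ then acts nontrivially on $U^\perp/U$, the Weil factor $W'(g|_{U^\perp})$ buried inside $W_t(g)$ prevents $S'(H(U^\perp))$ from normalising $W_t(N\cap(1+\j))$ modulo scalars, and one must instead run the argument along a filtration $N\cap(1+\i)=\Lambda_0\subset\Lambda_1\subset\cdots\subset\Lambda_r=N\cap(1+\j)$ arranged so that at each step the pertinent Weil operators act as scalars on $Z_t$, propagating equidistribution up the filtration. Taming this Weil correction is what I expect to be the main obstacle; it is also here that the hypothesis that $h$ is isotropic enters, since it is needed to keep $U^\perp=\j V$ large enough (containing the requisite primitive isotropic vectors) for the Heisenberg pairings above to stay nondegenerate at every stage — which is precisely what fails in the excluded case $m=2$ with $h$ hermitian and anisotropic. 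The rest is bookkeeping with the commutator identities and the counting Lemmas \ref{nis} and \ref{lincar} already established.
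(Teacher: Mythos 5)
Your argument for the case $\j^2\subseteq\i$ is essentially correct and is a genuinely different route from the paper's: you let the Heisenberg group do the work, observing that conjugation by $S'((0,u))$ twists the restriction of $W_t$ to $N\cap(1+\j)$ by the character $g\mapsto\lambda(2f(u,gt-t))$, that these twists exhaust the dual of $N\cap(1+\j)/N\cap(1+\i)$ (here you should say explicitly that $f(U^\perp,gt-t)$ is an ideal of $R$, so primitivity of $\lambda$ forces it to vanish, whence $gt-t\in(\j V)^\perp=\i V$ and $g\in 1+\i$ by primitivity of $t$), and that Clifford theory for the normal abelian subgroup $\Lambda$ of $\mathcal G$ then makes all eigenspaces occur equidimensionally. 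But the Lemma is needed, via Theorem \ref{mainabelian}(e), for \emph{every} nonzero ideal $\i$ with $\i^2=(0)$, and for roughly two thirds of these one has $\j^2\not\subseteq\i$ (namely $\i=\r^i$ with $i>2e/3$). In that range your computation itself shows the method breaks: the conjugate of $W_t(g)$ acquires the extra factor $S'(0,(1-g^{-1})u)$ with $(1-g^{-1})u\in\j^2V\not\subseteq U$, so $\Lambda$ is no longer normalized by $S'(H(U^\perp))$ and the transitive permutation of isotypic components is unavailable. Your proposed fix -- a filtration of $N\cap(1+\j)$ along which ``the pertinent Weil operators act as scalars'' -- is not carried out and is not obviously executable, since $W'(g|_{U^\perp})$ for $g$ acting nontrivially on $U^\perp/U$ is genuinely non-scalar. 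This is a real gap, not bookkeeping.

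The paper avoids the dichotomy entirely by a single construction valid for all $\i$: it builds an $R$-submodule $Z\subseteq\j V$ (conditions (C1)--(C3)) such that $Q=Z+U$ is an $N\cap(1+\j)$-stable self-perpendicular polarization of $U^\perp/U$ with $\j(t+w)\cap Q=\i(t+w)$ for all $w\in\j V$; realizing $Z_t$ monomially on the cosets of $Q$ then shows directly that $N\cap(1+\j)$ permutes the monomial lines with every point stabilizer equal to $N\cap(1+\i)$, so $Z_t$ is a multiple of the regular representation of $N\cap(1+\j)/N\cap(1+\i)$ twisted by $\alpha_t$. Note also that your diagnosis of where isotropy of $h$ enters is off: $f$ is nondegenerate on $U^\perp/U$ in all cases; isotropy is used to place $t$ in a hyperbolic plane so that a polarization with property (C3) exists, and the excluded anisotropic case ($m=2$, ramified even) is handled in \S\ref{abel} by a separate counting argument showing $[\U(\overline{V}):S_{\overline{t}}]$ is independent of $t$.
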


\begin{proof} In the symplectic case there is nothing to do, so we assume henceforth that $*$ is nontrivial.

We claim that the action of $N\cap (1+\j)$ on $Z_t$ is monomial with stabilizers all equal to $N\cap (1+\i)$.
To see this, we find an $R$-submodule $Z$ of $\j V$ satisfying the following conditions:

(C1) $Z$ is $\r$-invariant, and therefore $N\cap (1+\j)$-invariant.

(C2) $((Z+\i V)/\i V)^\perp=(Z+\i V)/\i V$, with respect to the nondegenerate skew symmetric form
that $f$ induces on $\j V/\i V$.

(C3) Given any $w\in \j V$, we have $\j(t+w)\cap (Z+U)=\i(t+w)$.

Since $h$ is isotropic, $t$ belongs to a hyperbolic plane $E$ with hyperbolic basis $\{u_1,v_1\}$ such that $t=cu_1+v_1$ for some $c\in A$.

In the ramified odd case, $E^\perp$ is the direct sum of $n-1$ hyperbolic planes $E_i$ with hyperbolic bases $\{u_i,v_i\}$, and we take $Z$ to be the $\j$-span of $\{u_1,\dots,u_n\}$.

In the unramified and ramified even cases, $E^\perp$ has an orthogonal basis $w_3,\dots,w_{m}$.
Let $e$ be nilpotency degree of $\r$. If $e$ is even, we take $Z=\j u_1\oplus \r^{e/2} E^\perp$. If $e$ is odd, then $*$ is unramified and all $h(w_i,w_i)$ are units in $R\theta$, and we take
$$Z=\j u_1\oplus R p^{(e+1)/2}w_3\oplus\cdots R p^{(e+1)/2}w_m\oplus R p^{(e-1)/2}\theta w_3\oplus\cdots R p^{(e-1)/2}\theta w_m.$$

This produces the desired $Z$. We next set $Q=Z+U$ and let $Y=\C y$ be a one dimensional module for the subgroup
$(R,Q)$ of $H(U^\perp)$, acting on $Y$ via $\lambda$. By (C2), $B=\ind_{(R,Q)}^{H(U^\perp)} Y$ affords a Schr$\mathrm{\ddot{o}}$dinger representation $S':H(U^\perp)\to\GL(B)$ of type $\lambda$. Let $C$ be a transversal for $Q$ in $U^\perp$ and for $v\in C$
let $e_v=(0,v)y\in B$. Then $(e_v)_{v\in C}$ is a basis of $B$ and $H(U^\perp)$ permutes the one dimensional subspaces $\C e_v$, $v\in C$,
as follows:
\begin{equation}
\label{tu}
(0,w)\C e_v=\C e_{u},\quad w\in U^\perp, v\in C,
\end{equation}
where $u$ be the only element of $C$ such that $w+v\equiv u\mod Q$. Moreover, it follows from (C1) that $Q$ is $N\cap (1+\j)$-invariant,
so the argument given in \cite[\S 3]{CMS1} shows that $N\cap (1+\j)$ also permutes the one dimensional subspaces $\C e_v$, $v\in C$, via $W'$.
In fact,
\begin{equation}
\label{tu2}
g\C e_v=\C e_{z},\quad g\in N\cap (1+\j), v\in C,
\end{equation}
where $z$ is the only element of $C$ such that $gv\equiv z\mod Q$. Using (\ref{formu}), (\ref{tu}) and (\ref{tu2}) we see that
the stabilizer in $N\cap (1+\j)$ of a given $\C e_v$ is formed by all $1+a\in N\cap (1+\j)$ such that
$$
(1+a)v+at\equiv v\mod Q.
$$
By (C3), this implies $a\in\i$, so the stabilizer of $\C e_v$ in $N\cap (1+\j)$ is indeed $N\cap (1+\i)$. This proves the claim.

Since $N\cap (1+\i)$ acts on $Z_t$ via $\alpha_t|_{N\cap (1+\i)}$, the above implies that the $N\cap (1+\j)$-module $Z_t$ is the direct sum of
$$
\frac{\dim(Z)}{|N\cap (1+\j)/N\cap (1+\i)|}
$$
copies of  $\ind_{N\cap (1+\i)}^{N\cap (1+\j)} P_t$, where $P_t$ is a one dimensional $N\cap (1+\i)$-module via $\alpha_t|_{N\cap (1+\i)}$.
On the other hand, $\alpha_t|_{N\cap (1+\i)}$ extends to $N\cap (1+\j)$, because $N\cap (1+\i)$ is abelian, so $\ind_{N\cap (1+\i)}^{N\cap (1+\j)} P_t$
is the direct sum of one dimensional $N\cap (1+\j)$-modules, one for each extension of $\alpha_t|_{N\cap (1+\i)}$ to  $N\cap (1+\j)$ (use Gallagher theorem).
The result thus follows.
\end{proof}

\section{Abelian Clifford theory}\label{abel}

\noindent{\it Proof of Theorem \ref{mainabelian}.} (a) This is proven in Corollary \ref{co}.

(b) This is proven \cite[Theorem 5.4]{CMS1}, \cite[Theorem 5.1]{S}, \cite[\S 6 and \S 7]{HS} and \cite[\S 5]{HSS}.

(c) As indicated in the Introduction, we have
$$
Top\cong \underset{t\in T}\bigoplus \ind_{G_t}^{\U(V)} Z_t.
$$
On the other hand, by Lemma \ref{bot}, $Bot$ is $\U(V)$-isomorphic to the direct sum of all $\ind_{G_t}^{\U(V)} Z_t$ such that $t\in T$ but $t\notin \P$. By definition, $Top=X/Bot$, so
$$
Top\cong \underset{t\in T\cap\P}\bigoplus \ind_{G_t}^{\U(V)} Z_t.
$$

(d) and (e) As indicated in the Introduction,
the congruence subgroup $\Omega(\i)$ acts on any given $Z_t$ via $\alpha_t$.
Since $N$ is central in $\U(V)$, it follows that $N\cap (1+\i)=N\cap \Omega(\i)$ acts on $\ind_{G_t}^{\U(V)} Z_t$ via $\alpha_t|_{N\cap (1+\i)}$.

Fix $t\in T\cap\P$. Since $Top$ is multiplicity free, $\ind_{G_t}^{\U(V)} Z_t$  is isomorphic to the sum of some $Top(\phi)$.
All of these summands must satisfy $\phi|_{N\cap (1+\i)}=\alpha_t|_{N\cap (1+\i)}$, so $\phi$ must be an extension of
$\alpha_t|_{N\cap (1+\i)}$ to $N$.

Now $\alpha_t|_{N\cap (1+\i)}$ extends to $|N/N\cap (1+\i)|$ distinct linear characters of $N$.
By Lemma \ref{imp}, if $u,v$ are distinct elements of $T\cap\P$, then $\alpha_u|_{N\cap (1+i)}\neq \alpha_v|_{N\cap (1+i)}$.
Thus for {\em every} one of the $|N/N\cap (1+\i)|$ extensions $\phi$ of $\alpha_t|_{N\cap (1+\i)}$ to $N$, there is a copy
of $Top(\phi)$ inside $\ind_{G_t}^{\U(V)} Z_t$. This proves that $\ind_{G_t}^{\U(V)} Z_t$ is isomorphic to the direct sum of exactly $|N/N\cap (1+\i)|$
irreducible modules, namely all $Top(\phi)$ such that $\phi$ extends $\alpha_t|_{N\cap (1+\i)}$.

Let $\phi_1,\dots,\phi_k$ be the list of distinct linear characters of the group $N\cap (1+\j)=N\cap G_t$
that extend  the linear character $\alpha_t|_{N\cap (1+\i)}$ of $N\cap (1+\i)$. It is easy to see that $k=|N\cap (1+\j)/N\cap (1+\i)|$.
As $Z_t$ is a $G_t$-module, we may consider its eigenspaces $Z_t(\phi_1),\dots,Z_t(\phi_k)$. These are $G_t$-submodules of $Z_t$.
Since $N\cap (1+\i)$ acts on $Z_t$ via  $\alpha_t|_{N\cap (1+\i)}$, it follows that
$$
Z_t=\underset{1\leq s\leq k}\bigoplus Z_t(\phi_s).
$$
Note that $Z_t$ is the direct sum of \emph{all} of its $N\cap G_t$-eigenspaces, but those $\phi$ not extending $\alpha_t|_{N\cap (1+\i)}$ produce
$Z_t(\phi)=(0)$ because $N\cap (1+\i)$ acts on $Z_t$ via $\alpha_t|_{N\cap (1+\i)}$. Now each $Z_t(\phi)=e_\phi Z_t$, where $e_\phi$
is the idempotent of $N\cap G_t$ associated to $\phi$. Since $N$ is central in $\U(V)$, we see that
$$
g e_\phi z=e_\phi g z,\quad g\in G_t, z\in Z_t,
$$
where the action of $g$ and $e_\phi$ on $Z_t$ is in (\ref{formu}). Thus, we know how $G_t$ acts on each $Z_t(\phi_s)$.

Now each $\phi_s$ extends to a linear character of $N$ in $r=|N/N\cap (1+\j)|$ ways. Let
$\psi_{1,s},\dots,\psi_{r,s}$ be these characters. For each $1\leq l\leq r$, we can then consider the $G_t N$-module $Z_t(\psi_{l,s})$,
which is the $G_t$-module $Z_t(\phi_s)$ with the action of $N$ extended from $\phi_s$ to $\psi_{l,s}$. This is well-defined.

Since the action of $G_t$ on $Z_t(\phi_s)$ extends to $G_t N$, we have
$$
\ind_{G_t}^{G_t N} Z_t(\phi_s)\cong \ind_{G_t}^{G_t N} (Z_t(\phi_s)\otimes 1_{G_t})\cong Z_t(\phi_s)\otimes  \ind_{G_t}^{G_t N} 1_{G_t}.
$$
Since $\ind_{G_t}^{G_t N} 1_{G_t}$ is the regular character of the abelian group $G_t N/G_t\cong N/N\cap (1+\j)$ inflated to $G_t N$, it follows that
$$
\ind_{G_t}^{G_t N} Z_t(\phi_s)\cong\underset{1\leq l\leq r}\bigoplus Z_t(\psi_{l,s}).
$$
Now
$$
\begin{aligned}
\ind_{G_t}^{\U(V)} Z_t &\cong\ind_{G_t N}^{\U(V)} \ind_{G_t}^{G_t N} Z_t\\
&\cong\ind_{G_t N}^{\U(V)} \ind_{G_t}^{G_t N} \underset{1\leq s\leq k}\bigoplus Z_t(\phi_s)\\
&\cong\ind_{G_t N}^{\U(V)} \underset{1\leq s\leq k}\bigoplus \ind_{G_t}^{G_t N} Z_t(\phi_s)\\
&\cong\ind_{G_t N}^{\U(V)} \underset{s,l}\bigoplus  Z_t(\psi_{l,s})\\
&\cong\underset{s,l}\bigoplus  \ind_{G_t N}^{\U(V)}  Z_t(\psi_{l,s}).
\end{aligned}
$$
It is clear that $\ind_{G_t N}^{\U(V)}  Z_t(\psi_{l,s})$ is contained in $Top(\psi_{l,s})$. Therefore, either $\ind_{G_t N}^{\U(V)}  Z_t(\psi_{l,s})=(0)$
or $\ind_{G_t N}^{\U(V)}  Z_t(\psi_{l,s})=Top(\psi_{l,s})$. But  $\ind_{G_t}^{\U(V)} Z_t$ is the
direct sum of $|N/N\cap (1+\i)|$ irreducible modules, so equality prevails in all cases.

Thus $\ind_{G_t N}^{\U(V)}  Z_t(\psi_{l,s})=Top(\psi_{l,s})$ is
irreducible. In particular, $Z_t(\psi_{l,s})$ is an irreducible $G_t N$-module, clearly lying
over $\alpha_t$. Since $N$ is central in $\U(V)$, it follows that $Z_t(\phi_s)$ is indeed an irreducible $G_t$-module.
By Proposition \ref{inertia}, the stabilizer of $\alpha_t$ in $\U(V)$ is $G_t N$.

When $h$ is isotropic, the dimension $Z_t(\phi)$ is given in Lemma \ref{dim}. Suppose next $h$ is non isotropic.
Then $m=2$, $h$ is hermitian and $*$ is ramified even. Reduction modulo $\j$ yields a canonical projection $V\mapsto \overline{V}$,
which gives rise to the group epimorphism $\U(V)\to\U(\overline{V})$. The subgroup $G_t$ maps onto the point stabilizer $S_{\overline{t}}$ of
$\overline{t}\in\overline{V}$
in $\U(\overline{V})$. By \cite[Proposition 6.2]{CHQS}, the index $[\U(\overline{V}):S_{\overline{t}}]$ is independent of $\overline{t}$. Now
$$
[\U(V):G_t]=[\U(V)/\Omega(\j):G_t/\Omega(\j)]=[\U(\overline{V}):S_{\overline{t}}],
$$
so $[\U(V):G_t]$ is independent of $t$ and hence of $\phi$. Since
$$
[\U(V):G_t N]=[\U(V):G_t]/[G_t N:G_t]=[\U(V):G_t]/[N:N\cap (1+\j)],
$$
we infer that $[\U(V):G_t N]$ is independent of $t$ and hence of $\phi$. Suppose first $\i=\r^\ell$. Then $Z_t$ is one dimensional, whence
the dimension of $Top(\phi)\cong\ind_{G_t N}^{\U(V)}  Z_t$ is independent of $\phi$. For arbitrary $\i$, since $\dim(Top(\phi))$ and
$[\U(V):G_t N]$ are independent of $\phi$, it follows from $Top(\phi)\cong\ind_{G_t N}^{\U(V)}  Z_t(\phi)$ that $\dim Z_t(\phi)$ is independent of $\phi$,
whence $\dim Z_t(\phi)=\dim(Z)/{|N\cap (1+\j)/N\cap (1+\i)|}$.\qed

\section{A homomorphism $\Omega(\j)\to \j V/\i V$}\label{homomega}

We fix throughout a primitive vector $t\in V$ and assume that $\j^2\subseteq\i$. The condition $\j^2\subseteq\i$
readily yields that the map $\Gamma:\Omega(\j)\to U^\perp/U$ given by $\Gamma(g)=gt-t+U$ is a group homomorphism. We set
$$I=\Gamma(\Omega(\j)),\quad I(m)=[(U^\perp/U):I].$$

We want to prove that $I(m)=|N\cap (1+\j)/N\cap (1+\i)|$. The goal of this section is to prove that
$I(m)\leq |N\cap (1+\j)/N\cap (1+\i)|$. This will yield the desired equality by means of representation theory.

\begin{lemma}\label{m3} Suppose $m>2$. Then
\begin{equation}
\label{idos}
I(m)\leq I(2).
\end{equation}
\end{lemma}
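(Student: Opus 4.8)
\emph{Proof proposal.} The plan is to peel off a rank‑two orthogonal summand of $V$ containing $t$ and to account for the remaining $m-2$ directions by Eichler transformations. Since $m>2$ the form $h$ is isotropic (recalled in the introduction), and one checks that $t$ lies primitively in some rank‑two nondegenerate summand $W$ of $V$, which one may take to be a hyperbolic plane; write $V=W\perp W'$ with $W'=W^{\perp}$ of rank $m-2\ge 1$. Let $\pi\colon \j V/\i V=(\j W/\i W)\oplus(\j W'/\i W')\to \j W'/\i W'$ be the projection onto the second summand. For any subgroup $H$ of the finite abelian group $\j V/\i V$ the first isomorphism theorem gives $|H|=|\pi(H)|\cdot|H\cap(\j W/\i W)|$. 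Hence it suffices to establish: (i) $\Gamma(\Omega(\j))$ contains the subgroup $\Gamma_{W}(\Omega_{W}(\j))$ of $\j W/\i W$, where $\Gamma_{W}$ and $\Omega_{W}(\j)$ denote the analogues of $\Gamma$ and $\Omega(\j)$ for the unitary group $\U(W)$; and (ii) $\pi\big(\Gamma(\Omega(\j))\big)=\j W'/\i W'$. Indeed, taking $H=\Gamma(\Omega(\j))$ then gives $|\Gamma(\Omega(\j))|\ge |\j W'/\i W'|\cdot|\Gamma_{W}(\Omega_{W}(\j))|$, and therefore
$$
I(m)=\frac{|\j V/\i V|}{|\Gamma(\Omega(\j))|}\le\frac{|\j W/\i W|}{|\Gamma_{W}(\Omega_{W}(\j))|}=I(2).
$$

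Statement (i) is the easy half: extending each element of $\U(W)$ by the identity on $W'$ realises $\U(W)$ as a subgroup of $\U(V)$ carrying $\Omega_{W}(\j)$ into $\Omega(\j)$, and for such an element $g$ one has $gt-t\in\j W$, so that $\Gamma(g)$ is exactly the image of $\Gamma_{W}(g)$ under the inclusion $\j W/\i W\hookrightarrow\j V/\i V$. For statement (ii) the mechanism is the displacement formula for Eichler transformations: if $a\in\j$ and $u,v\in V$ are isotropic and orthogonal then $\rho_{a,u,v}\in\Omega(\j)$ and
$$
\rho_{a,u,v}(t)-t=a\,h(u,t)\,v-\varepsilon\,a^{*}\,h(v,t)\,u.
$$
Fixing an isotropic $u\in W$ with $h(u,t)\in A^{\times}$ (available since $W$ is a hyperbolic plane and $t$ is primitive in it, so $u\in W$ has zero $W'$‑component), and letting $a$ range over $\j$ and $v$ over the isotropic vectors of $W'$, we obtain, modulo $\i V$, every $\j$‑multiple of every isotropic vector of $W'$; when $W'$ is an orthogonal sum of hyperbolic planes these already generate $\j W'/\i W'$, proving (ii). In general one must supplement these by Eichler transformations and unitary transvections built from isotropic vectors of $V$ having a nonzero $W'$‑component, chosen so that their displacements, after subtracting contributions already lying in $\Gamma_{W}(\Omega_{W}(\j))$, sweep out all of $\j W'/\i W'$.

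The main obstacle is precisely this last point: when the anisotropic kernel of $V$ is nonzero — which can happen for small $m$ in the hermitian families and for $m=3$ — the summand $W^{\perp}$ is not split, its isotropic vectors do not span it, and the missing directions must be produced by hand. I expect the verification to come down to a universality statement for the relevant binary hermitian or skew hermitian forms over $A$ (that they represent enough units of $R$), checked by an explicit Gram‑matrix computation separately in each of the five families, together with the bookkeeping needed to see that the resulting displacements indeed project onto a generating set of $\j W'/\i W'$. By contrast, part (i) and the index identity are uniform and immediate, so that the whole weight of the lemma is carried by the geometric construction of $W$ and by the surjectivity in (ii).
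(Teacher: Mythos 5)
Your overall strategy coincides with the paper's: choose an isotropic $u$ with $h(u,t)=1$ (possible since $m>2$ forces $h$ isotropic), set $E=\mathrm{span}\{u,t\}$, a hyperbolic plane, write $V=E\perp F$ with $F=E^\perp$, note that extending $\U(E)$ by the identity on $F$ puts the rank-two image inside $I$, and then try to show that $I$ projects onto $\j F/\i F$; your index bookkeeping via $|H|=|\pi(H)|\cdot|H\cap(\j E/\i E)|$ is correct and is exactly what the paper leaves implicit. Your treatment of the split case also matches the paper: for $a\in\j$ and $w\in F$ isotropic, $\rho_{a,u,w}\in\Omega(\j)$ and $\rho_{a,u,w}(t)-t=aw$, and when $F$ is generated by its isotropic vectors this gives $(\j F+U)/U\subseteq I$.

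The case you label the ``main obstacle'' --- $F$ not spanned by isotropic vectors --- is, however, a genuine half of the lemma, and your proposal does not prove it; moreover, the repair you anticipate (a family-by-family universality statement for binary forms, i.e.\ which units they represent) is not what is needed. The paper's fix is a single uniform construction. If $F$ has no primitive isotropic vector, then by \cite[Proposition 2.12]{CS} one is in the unramified or ramified even case, so by \cite[Theorem 3.5]{CHQS} $F$ has an orthogonal basis $\{w_1,\dots,w_{m-2}\}$. Writing $t=eu+v$ with $\{u,v\}$ a hyperbolic basis of $E$, for each $a\in\j$ one takes the transformation $g$ acting as the identity on $\mathrm{span}\{u,v,w_i\}^\perp$ and on $\mathrm{span}\{u,v,w_i\}$ by
$$
\left(\begin{array}{ccc} 1 & b & c \\ 0 & 1 & 0 \\ 0 & a & 1 \end{array}\right),
\qquad b=-\varepsilon aa^*d/2,\quad c=-\varepsilon a^*d,\quad d=h(w_i,w_i);
$$
this is the Eichler transformation $\rho_{a,u,w_i}$ corrected for the anisotropy of $w_i$ by the term $b$. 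One checks $g\in\Omega(\j)$ and $g(t)-t=bu+aw_i$ with $bu\in\j E$ (in fact $b\in\j^2\subseteq\i$), so $aw_i+U$ lies in $I$ modulo $(\j E+U)/U$, which is precisely your statement (ii). So the missing directions are produced by modifying the Eichler transformation itself, not by classifying the values of the residual anisotropic form; without this step your argument establishes the lemma only when $E^\perp$ is spanned by isotropic vectors.
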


\begin{proof} Since $m>2$, $V$ contains a primitive isotropic vector and a fortiori a hyperbolic plane. This implies that $V$ has a basis
formed by isotropic vectors. Since $t$ is primitive, we deduce the existence of an isotropic vector, say $u$ such that $h(u,t)=1$.

Set $E=\mathrm{span}\{u,t\}$ and $F=E^\perp$. Then $E$ and $F$ are free submodules of $V$ of ranks 2 and $m-2$, respectively, and $V=E\oplus F$.

Suppose first that $F$ has a primitive isotropic vector. Then, as seen above, $F$ is generated as abelian group by isotropic vectors. Thus $\j F$
is generated as abelian group by vectors of the form $aw$, where $a\in\j$ and $w$ is isotropic. Given such a vector $aw$, consider
the Eichler transformation $\rho_{a,u,w}\in\Omega(\j)$. Then
$$
\rho_{a,u,w}(t)-t=aw.
$$
This proves that $(\j F+U)/U$ is included in $I$, which implies (\ref{idos}).

Suppose next that $F$ has no primitive isotropic vector. By \cite[Proposition 2.12]{CS}, $*$ is unramified or ramified of even type. In either case, \cite[Theorem 3.5]{CHQS} implies that $F$ has an orthogonal basis, say $\{w_1,\dots,w_{m-2}\}$. We claim that given any $a\in\j$ and any $1\leq i\leq m-2$, we have
\begin{equation}
\label{itres}
aw_i+U\equiv z+U\mod I
\end{equation}
for some $z\in\j E$, which implies (\ref{idos}).

Let $\{u,v\}$ be a hyperbolic basis of $E$. Since $h(u,t)=1$, we have $t=eu+v$ for some $e\in A$. Let $g$ be the $A$-linear
transformation of $V$ which acts like the identity on the orthogonal complement to $\mathrm{span}\{u,v,w_i\}$ and whose restriction
to $\mathrm{span}\{u,v,w_i\}$ is represented by the matrix $G$ relative to the basis $\{u,v,w_i\}$, where
$$
G=\left(
  \begin{array}{ccc}
    1 & b & c \\
    0 & 1 & 0  \\
    0 & a & 1\\
  \end{array}
\right),
$$
$$
b=-\varepsilon aa^*d/2,\; c=-\varepsilon a^*d,\; d=h(w_i,w_i).
$$
Then $g\in\Omega(\j)$ and $g(t)-t=bu+aw_i$, which gives (\ref{itres}).
\end{proof}

\begin{lemma}\label{sn} The elements of $N\cap (1+\j)$ are exactly those of the form $1+z$, where $z=s^2/2+s$
and $s\in S\cap\j$.
\end{lemma}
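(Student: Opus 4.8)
*The elements of $N\cap (1+\j)$ are exactly those of the form $1+z$, where $z=s^2/2+s$ and $s\in S\cap\j$.*

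The plan is to convert the defining condition of $N$ into a polynomial identity in $A$ and then solve it modulo the ideals $\i$ and $\j$, exploiting $\i^2=(0)$ and $\j^2\subseteq\i$. Write a typical element of $1+\j$ as $1+z$ with $z\in\j$, and split $z=r+s$ with $r\in R\cap\j$, $s\in S\cap\j$ (the first lemma of Section~2 guarantees this decomposition). Since $z^*=r-s$, expanding $(1+z)(1+z)^*=1$ gives $z+z^*+zz^*=0$, that is $2r+(r^2-s^2)=0$, which I record as
\[
r(r+2)=s^2.
\]
Because $r\in\j\subseteq\r$, the factor $r+2$ is a unit, so this equation already pins $r$ down in terms of $s$; the remaining task is to show the solution is $r=s^2/2$.

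For the inclusion $\supseteq$, given $s\in S\cap\j$ I would take $z=s^2/2+s$, with $R$-part $r=s^2/2$ and $S$-part equal to $s$, and verify membership directly: first $s^2\in\j^2\subseteq\i$, so $z\in\j$; second $s^4=(s^2)^2\in\i^2=(0)$, so $r(r+2)=r^2+2r=s^4/4+s^2=s^2$, which is precisely the displayed identity, hence $1+z\in N\cap(1+\j)$.

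For the inclusion $\subseteq$, start from $1+z\in N\cap(1+\j)$, so $r(r+2)=s^2$, and rewrite this as $r=(s^2-r^2)/2$. Now $s\in\j$ forces $s^2\in\j^2\subseteq\i$, and $r\in\j$ forces $r^2\in\j^2\subseteq\i$, whence $r\in\i$; then $r^2\in\i^2=(0)$, and substituting $r^2=0$ back gives $r=s^2/2$. Therefore $z=s^2/2+s$ with $s\in S\cap\j$, completing the proof.

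I do not expect a genuine difficulty here; the only thing to be careful about is ideal bookkeeping — using $\j^2\subseteq\i$ to land $s^2$ and $r^2$ in $\i$, then $\i^2=(0)$ to annihilate $r^2$, and noting that $s^2/2\in R$ so that the $S$-component of $z=s^2/2+s$ is exactly $s$ and the splitting used is the genuine $R\oplus S$ decomposition.
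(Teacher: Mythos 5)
Your proposal is correct and follows essentially the same route as the paper: decompose $z=r+s$ with $r\in R\cap\j$, $s\in S\cap\j$, expand the norm condition to $2r+r^2-s^2=0$, use $\j^2\subseteq\i$ to force $r\in\i$ and hence $r^2=0$, and check the converse via $s^4\in\i^2=(0)$. The remark about $r+2$ being a unit is a harmless aside not needed for the argument.
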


\begin{proof} Suppose first $1+z\in N\cap (1+\j)$. Then $z=r+s$, where $r\in R\cap\j$ and $s\in S\cap\j$. Thus
$$
1=(1+r+s)(1+r-s)=1+(r+s)(r-s)+2r=1+r^2-s^2+2r,
$$
so
$$
r^2-s^2=-2r.
$$
But $r^2-s^2=(r+s)(r-s)\in\j^2\subseteq\i$, so $r\in\i$, whence $r^2=0$ and fortiori $r=s^2/2$ and
$$
z=s^2/2+s.
$$
Conversely, for any such $z$,
$$
(1+z)(1+z)^*=1+s^4/4-s^2+2(s^2/2)=1,
$$
because $s^4\in \j^2\j^2\subseteq \i\i=(0)$.
\end{proof}

By Lemma \ref{sn}, given any $s\in S\cap\j$ the element $1+(s^2/2+s)$ belongs to $N\cap (1+\j)=N\cap\Omega(\j)$ and satisfies
\begin{equation}\label{p1}
(1+(s^2/2+s))t-t+U=st+U.
\end{equation}

\begin{lemma}\label{m2} Suppose $m=2$ and $h$ is skew hermitian (resp.  hermitian and isotropic).  Then
$$
I(2)\leq |N\cap (1+\j)/N\cap (1+\i)|.
$$
\end{lemma}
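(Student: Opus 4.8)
The plan is to exhibit an explicit subgroup of $\Omega(\j)$ whose image under $\Gamma$ has small index in $U^\perp/U=\j V/\i V$, so that $I(2)=[U^\perp/U:I]$ is forced to be small. Since $h$ is isotropic and $\dm V=2$, the argument used in the proof of Lemma \ref{dim} produces a hyperbolic basis $\{u,v\}$ of $V$ with $t=cu+v$ for some $c\in A$. Note that $U=\i V=\i u\oplus\i v$ and that $U^\perp/U$ is free of rank $2$ over $A/\i$ on the images of $u,v$. Recall $\varepsilon=-1$ if $h$ is skew hermitian and $\varepsilon=1$ if $h$ is hermitian, and put $J_1=\{a\in\j:a^*=-\varepsilon a\}$, which equals $R\cap\j$ when $h$ is skew hermitian and $S\cap\j$ when $h$ is hermitian.

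I would use two families inside $\Omega(\j)$. First, for $w\in J_1$ the transvection $\tau_{w,u}$ belongs to $\Omega(\j)$, and since $h(u,t)=h(u,cu+v)=h(u,v)=1$ we get $\Gamma(\tau_{w,u})=\tau_{w,u}(t)-t+U=wu+U$. Second, for $z\in\j$ let $g_z\in\U(V)$ act on the hyperbolic basis by $g_zu=(1+z)u$ and $g_zv=\overline{(1+z)}^{-1}v$; this is a genuine element of $\U(V)$ (being diagonal in a hyperbolic basis, it preserves $h$, as both $u,v$ are isotropic and $h(g_zu,g_zv)=h(u,v)$) and it lies in $\Omega(\j)$. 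Since $\overline{(1+z)}^{-1}-1\equiv -z^*\pmod{\i}$ (because $(z^*)^2=(z^2)^*\in\j^2\subseteq\i$), one computes $g_zt-t=czu+(\overline{(1+z)}^{-1}-1)v$, hence $\Gamma(g_z)=czu-z^*v+U$.

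Next, I would show these two families generate the kernel of the surjective group homomorphism $\Phi:U^\perp/U\to\j/(J_1+\i)$ given by $\Phi(xu+yv+U)=x+cy^*+(J_1+\i)$, which is well defined because $\i$ is $*$-invariant and $c\i\subseteq\i$. Both families visibly land in $\ker\Phi$. Conversely, if $xu+yv+U\in\ker\Phi$, write $x+cy^*=w+a$ with $w\in J_1$ and $a\in\i$; then, using that $\Gamma$ is a homomorphism and that $\Gamma(g_{-y^*})=-cy^*u+yv+U$, one gets $\Gamma(\tau_{w,u}\,g_{-y^*})=\Gamma(\tau_{w,u})+\Gamma(g_{-y^*})=(w-cy^*)u+yv+U=xu+yv+U$, since $w-cy^*\equiv x\pmod{\i}$. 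Thus $\ker\Phi\subseteq I$ and therefore $I(2)\le[U^\perp/U:\ker\Phi]=|\j/(J_1+\i)|$.

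It remains to bound $|\j/(J_1+\i)|$. Since every ideal $\k$ of $A$ splits as $(R\cap\k)\oplus(S\cap\k)$ and $\i\subseteq\j$, we get $J_1+\i=(R\cap\j)\oplus(S\cap\i)$ in the skew hermitian case and $J_1+\i=(R\cap\i)\oplus(S\cap\j)$ in the hermitian case, so $|\j/(J_1+\i)|$ equals $|S\cap\j|/|S\cap\i|$ or $|R\cap\j|/|R\cap\i|$ respectively; by Lemmas \ref{nis} and \ref{sn}, $|N\cap(1+\j)/N\cap(1+\i)|=|S\cap\j|/|S\cap\i|$, which settles the skew hermitian case with equality. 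In the hermitian case $*$ is unramified or ramified of even type, and the numerology in the proof of Lemma \ref{lincar} ($|S|=|S\cap\j||S\cap\i|$ together with $|R\cap\k|=|S\cap\k|$ in the unramified case, and $|R|=|R\cap\j||S\cap\i|=|R\cap\i||S\cap\j|$ in the ramified even case) gives $|R\cap\j|/|R\cap\i|=|S\cap\j|/|S\cap\i|$ as well, so $I(2)\le|N\cap(1+\j)/N\cap(1+\i)|$ in all cases. The construction is short; the only points needing care are verifying that $g_z$ is honestly unitary (not merely unitary modulo $\i$) — immediate since it is diagonal in a hyperbolic basis — and the index bookkeeping in the last step, where the hermitian equality rests on counting already done for Lemma \ref{lincar}; I anticipate no serious obstacle, the symplectic subcase being the degenerate one where $c=0$, $J_1=\j$, $\Phi=0$, and $I$ is all of $U^\perp/U$.
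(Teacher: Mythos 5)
Your proof is correct and follows essentially the same route as the paper: after placing $t=cu+v$ in a hyperbolic basis, both arguments exhibit the same explicit elements of $\Omega(\j)$ --- transvections $\tau_{w,u}$ and diagonal torus elements --- and count the subgroup of $I$ they generate (your family $g_z$ with $z\in S\cap\j$ recovers the paper's scalar elements $1+s^2/2+s$ of $N\cap(1+\j)$, and your homomorphism $\Phi$ merely repackages the paper's coset bookkeeping). The only blemishes are cosmetic and harmless: $U^\perp/U$ is generated over $\j/\i$ by the images of $u,v$, not free of rank $2$ over $A/\i$, and in the symplectic subcase $c$ need not vanish.
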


\begin{proof} By \cite[Proposition 2.12]{CS}, $V$ has a hyperbolic basis $\{u,v\}$ (if $h$ is hermitian and isotropic,
the existence of such a basis is readily verified). Given that $t$ is primitive, we may assume
without loss of generality that $t=cu+v$ for some $c\in A$. Given any $r\in R\cap\j$ (resp. $s\in R\cap\j$), we consider the unitary transvection $\tau_{r,u}$ (resp. $\tau_{s,u}$)
and the unitary transformation $g$, represented by the matrix
$$G=\left(
\begin{array}{cc}
(1+r)^{-1} & 0 \\
0 & 1+r \\
\end{array}
\right).
$$
Then $\tau_{r,u}$ (resp. $\tau_{s,u}$) as well as $g$ belong to $\Omega(\j)$, and we have
\begin{equation}\label{p2}
\tau_{r,u}t-t=ru\quad (\text{resp. }\tau_{s,u}t-t=su)
\end{equation}
and
\begin{equation}\label{p3}
gt-t=rt+c ((1+r)^{-1} - (1+r))u.
\end{equation}
Since $\j=R\cap\j\oplus S\cap\j$, (\ref{p1})-(\ref{p3}) show that given any $w\in U^\perp$ there is $d\in S\cap\j$ (resp. $d\in R\cap\j$) such that
$$
w+U\equiv du+U\mod I.
$$
Now $|N\cap (1+\j)|=|S\cap\j|$ by Lemma \ref{sn} and $|N\cap (1+\i)|=|S\cap\i|$ by Lemma \ref{nis}, so
$$
I(2)\leq |S\cap\j/S\cap\i|=|N\cap (1+\j)/N\cap (1+\i)|.
$$
(In the ramified even case, for any ideal $k$ of $A$ we have a bijection $R\cap\k\to S\cap\k$, given by $r\mapsto \pi r$. Thus, in this case,
$$
I(2)\leq |R\cap\j/R\cap\i|=|S\cap\j/S\cap\i|=|N\cap (1+\j)/N\cap (1+\i)|.)
$$
\end{proof}

\begin{lemma}\label{m2noniso} Suppose $m=2$ and $h$ is hermitian and non isotropic. Then
$$
I(2)\leq |N\cap (1+\j)/N\cap (1+\i)|.
$$
\end{lemma}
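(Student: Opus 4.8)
The plan is to pass to coordinates adapted to $t$ and reduce the statement to an elementary count. Since $h$ is hermitian and non-isotropic we are in the ramified even case with $m=2$, and $U^\perp/U=\j V/\i V$ by Lemmas \ref{perp} and \ref{ij}. First I would record that $h(v,v)\in R^\times$ for every primitive $v\in V$: reducing modulo $\r$, the form on $V/\r V$ induced by $h$ is still anisotropic over $F_q$ (an isotropic primitive vector there would lift to one of $h$ by successive approximation, $\r$ being nilpotent), so $h(v,v)\notin\r$, and being in $R$ it is a unit. Hence $At$ is a nondegenerate rank one submodule, $V=At\oplus(At)^\perp$, and $(At)^\perp=Au$ for some primitive $u$; thus $\{t,u\}$ is an orthogonal basis, $d_1:=h(t,t)$ and $d_2:=h(u,u)$ lie in $R^\times$, and $U^\perp/U=\j V/\i V$ is identified with $(\j/\i)t\oplus(\j/\i)u$.

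Next I would describe $\Omega(\j)$ in these coordinates. For $g\in\U(V)$ with $gt=at+cu$ and $gu=bt+eu$, preservation of $h$ is the system
$$
d_1aa^*+d_2cc^*=d_1,\qquad d_1bb^*+d_2ee^*=d_2,\qquad d_1a^*b+d_2c^*e=0 .
$$
For $g\in\Omega(\j)$ write $a=1+\al$, $e=1+\de$, $b=\be$, $c=\ga$ with $\al,\be,\ga,\de\in\j$. The third equation gives $\be=-d_1^{-1}d_2\,\ga^*(1+\de)(1+\al^*)^{-1}$, which automatically lies in $\j$; substituting it into the second equation and simplifying by the first, the second equation becomes $(1+\de)(1+\de^*)=(1+\al)(1+\al^*)$, which holds for $\de=\al$; and the first equation is just $d_1(\al+\al^*+\al\al^*)+d_2\ga\ga^*=0$. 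Hence $\Gamma$ factors as the surjection $g\mapsto(\al,\ga)$ of $\Omega(\j)$ onto $\{(\al,\ga)\in\j^2:\ d_1(1+\al)(1+\al^*)+d_2\ga\ga^*=d_1\}$ followed by $(\al,\ga)\mapsto\al t+\ga u+U$.

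Then I would compute $I=\Gamma(\Omega(\j))$ as a subgroup of $(\j/\i)^2$. Fix $\overline\al,\overline\ga\in\j/\i$ with lifts $\al_0,\ga_0\in\j$; every lift is $\al_0+x,\ \ga_0+y$ with $x,y\in\i$, and since $\j\i=(0)=\i^2$ one has $(\al_0+x)(\al_0+x)^*=\al_0\al_0^*$ and likewise for $\ga$. Thus $(\overline\al,\overline\ga)\in I$ iff $x+x^*=-(\al_0+\al_0^*)-\al_0\al_0^*-d_1^{-1}d_2\,\ga_0\ga_0^*$ for some $x\in\i$. The right-hand side lies in $R\cap\j$, and because $\j^2\subseteq\i$ gives $\al_0\al_0^*,\ga_0\ga_0^*\in\i$, it lies in $R\cap\i$ iff $\al_0+\al_0^*$ does; as $\{x+x^*:x\in\i\}=R\cap\i$ (here $A$ has odd characteristic), we get $(\overline\al,\overline\ga)\in I$ iff $\al_0+\al_0^*\in R\cap\i$, a condition on $\overline\al$ alone. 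Decomposing $\al_0=r_0+s_0$ along $\j=(R\cap\j)\oplus(S\cap\j)$ this says $r_0\in R\cap\i$, i.e. $\overline\al\in(S\cap\j+\i)/\i$. Hence $I$ is the set of pairs in $(\j/\i)^2$ whose first entry lies in $(S\cap\j+\i)/\i$, so $|I|=|S\cap\j/S\cap\i|\cdot|\j/\i|$.

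Finally, $|U^\perp/U|=|\j/\i|^2$ since $V$ has rank $2$, so
$$
I(2)=\frac{|\j/\i|^2}{|I|}=\frac{|\j|/|\i|}{|S\cap\j|/|S\cap\i|}=\frac{|R\cap\j|}{|R\cap\i|}
$$
using $|\j|=|R\cap\j|\,|S\cap\j|$ and $|\i|=|R\cap\i|\,|S\cap\i|$. To conclude I would combine $|\i|\,|\j|=|A|=|R|^2$ (the last equality because $\{1,\pi\}$ is an $R$-basis of $A$ in the even case; cf. the proof of Lemma \ref{ij}) with $|R|=|R\cap\j|\,|S\cap\i|$ (established in the proof of Lemma \ref{lincar}), which force $|R\cap\j|\,|S\cap\i|=|R\cap\i|\,|S\cap\j|$; hence $I(2)=|S\cap\j|/|S\cap\i|=|N\cap(1+\j)/N\cap(1+\i)|$ by Lemmas \ref{nis} and \ref{sn}, giving the asserted bound (in fact with equality). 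The step needing the most care is the identification of $I$ in the third paragraph, where the hypothesis $\j^2\subseteq\i$ is used to discard $\al_0\al_0^*$ and $\ga_0\ga_0^*$; the rest is routine.
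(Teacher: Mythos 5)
Your proof is correct and takes essentially the same route as the paper's: choose an orthogonal basis $\{t,u\}$ adapted to $t$ (using that $h(t,t)$ is a unit in the non-isotropic case) and exhibit explicit $2\times 2$ matrices in $\Omega(\j)$ with prescribed first column, the entries being solved from the unitarity equations modulo $\i$. Your version goes a bit further in that it determines the image $I$ exactly and hence yields the equality $I(2)=|N\cap (1+\j)/N\cap (1+\i)|$ directly, whereas the paper only proves the inequality here and obtains equality later, in Theorem \ref{mainnonabelian}(d), by a representation-theoretic argument.
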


\begin{proof} By \cite[Lemma 3.7]{CHQS}, $h(t,t)$ must be a unit. By \cite[Lemma 2.3]{CHQS} there is $v\in V$ such that
$\{t,v\}$ is an orthogonal basis of $V$. Thus, relative to this basis, $h$ has matrix
$$\left(
    \begin{array}{cc}
      e & 0 \\
      0 & d \\
    \end{array}
  \right),
$$
where $e,d\in R^*$. We claim that given any $a\in \j$ there is $g\in\Omega(\j)$ such that $gt-t=bt+av$ for some $b\in R\cap\i$.
We first look for $b\in R\cap\i$ such that $h((1+b)t+av,(1+b)t,av)=e$. This translates into $(1+b)(1+b)^*e+aa^* d=e$.
Since $\i^2=(0)$, this becomes $2be+aa^* d=0$, that is $b=-e^{-1}daa^*/2\in R\cap\i$.

Next let $a'=-a^*e^{-1}d\in\j$. Arguing much as above, we can solve $h(a' t+(1+b')v,a't+(1+b')v)=d$ for $b'\in R\cap\i$.

We may now define $g$ by means of the matrix
$$
G=\left(
                              \begin{array}{cc}
                                1+b & a' \\
                                a & 1+b' \\
                              \end{array}
                            \right).
$$
Since $\i\j=(0)$, we see that $g\in\U(V)$ and a fortiori $g\in\Omega(\j)$. It is also clear that $gt-t=bt+av$, as claimed.
This and (\ref{p1}) yield the desired inequality, much as in the end of the proof of Lemma \ref{m2}.
\end{proof}

\begin{prop}\label{index} We have $I(m)\leq |N\cap (1+\j)/N\cap (1+\i)|$.
\end{prop}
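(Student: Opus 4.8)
The plan is to assemble Proposition~\ref{index} from the three lemmas just proved, splitting on the rank $m$ and on the type of $h$, and using Lemma~\ref{m3} to push the general case down to rank two.

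First, the case $m=2$ requires nothing new. By the dichotomy recalled in the Introduction, either $h$ is skew hermitian, or $h$ is hermitian and isotropic, or $h$ is hermitian and non isotropic. In the first two situations Lemma~\ref{m2} gives $I(2)\le |N\cap(1+\j)/N\cap(1+\i)|$, and in the last situation Lemma~\ref{m2noniso} gives the same bound; so $I(m)\le |N\cap(1+\j)/N\cap(1+\i)|$ when $m=2$.

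Now suppose $m>2$. As noted in the Introduction, for $m\ge 3$ every form $h$ under consideration is isotropic, so $V$ has a primitive isotropic vector and, since $t$ is primitive, there is an isotropic $u$ with $h(u,t)=1$; this is exactly the hyperbolic plane $E=\mathrm{span}\{u,t\}$, with $V=E\oplus E^\perp$ and $t$ primitive in $E$, that underlies the proof of Lemma~\ref{m3}. That lemma gives $I(m)\le I(2)$, where $I(2)$ denotes the corresponding index $[(E\cap U^\perp)/U':I]$ computed inside the rank-two module $E$ (the ideals $\i,\j$ of $A$ are unchanged, and the groups $N\cap(1+\i)$ and $N\cap(1+\j)$ depend only on $A$ and $*$, not on $V$). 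Since $E$ has a hyperbolic basis, $h|_E$ is either skew hermitian or hermitian and isotropic, so Lemma~\ref{m2} applied to $E$ yields $I(2)\le |N\cap(1+\j)/N\cap(1+\i)|$. Chaining the two inequalities completes the proof.

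All the genuine work lives in Lemmas~\ref{m3},~\ref{m2} and~\ref{m2noniso}, so Proposition~\ref{index} is a short assembly. The one point I would be careful about is precisely the one highlighted above: that the rank-two module produced by the reduction in Lemma~\ref{m3} always carries an \emph{isotropic} form, hence is covered by Lemma~\ref{m2} and not by Lemma~\ref{m2noniso}. This is guaranteed because, for $m\ge 3$, $E$ is a hyperbolic plane; I expect no further obstacle.
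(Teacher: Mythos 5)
Your assembly is correct and is exactly what the paper does: its proof of Proposition~\ref{index} is the one-line statement that it follows from Lemmas~\ref{m3}, \ref{m2} and \ref{m2noniso}, combined in precisely the way you describe (reduce $m>2$ to $I(2)$ via the hyperbolic plane of Lemma~\ref{m3}, then invoke the rank-two lemmas). Your added care that the rank-two reduction lands in the isotropic case handled by Lemma~\ref{m2} is a sound reading of the proof of Lemma~\ref{m3}.
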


\begin{proof} This follows from Lemmas \ref{m3}, \ref{m2} and \ref{m2noniso}.
\end{proof}

\begin{note}{\rm The argument given above to compute the index of the image of $\Gamma$ works as well to show the following.
Fix $t\in V$ primitive and consider the function $\U(V)\to V$, given by $g\mapsto gt-t$. Let $E$ be the subgroup of $V$ generated by its image.
Then $[V:E]=|S|$, the number of skew hermitian elements of $A$.}
\end{note}

\section{Nonabelian Clifford theory}\label{prueba2}

\noindent{\it Proof of Theorem \ref{mainnonabelian}.} Since $\j^2\subseteq\i$, the congruence subgroup $\Omega(\j)$ acts trivially on $U^\perp/U$. Since $S'(0,u)=1_Z$
for all $u\in U$ and $f(U,U^\perp)=0$, we have
$$
W'(g|_{U^\perp})S'(k)W'(g|_{U^\perp})^{-1}=S'({}^g k)=S'(k),\quad g\in\Omega(\j), k\in H(U^\perp).
$$
But $S':H(U^\perp)\to\GL(Z)$ is irreducible, so by Schur's Lemma, $W'(g|_{U^\perp})$ is a scalar operator for every $g\in\Omega(\j)$. Now
$$
W_t(g)=\mu(h(gt,t))S'(0,gt-t)W'(g|_{U^\perp}),\quad g\in\Omega(\j),
$$
so the $\Omega(\j)$-invariant subspaces of $Z=Z_t$ via $W_t$ are precisely the subspaces of $Z$ invariant under all $S'(0,gt-t)$, where $g\in\Omega(\j)$. These subspaces
are exactly the same as those invariant under all $S'(k)$, where $k$ runs through the subgroup $H_t$ of $H(U^\perp)$.

Now $Z_t(\phi)$  is a $\Omega(\j)$-invariant subspace of $Z$ via $W_t$. By above,
$Z_t(\phi)$ is an $H_t$-invariant subspace of $Z$ via $S'$.

We claim that $Z_t(\phi)$ is an irreducible $H_t$-module via $S'$. By above, this implies
that $Z_t(\phi)$ is an irreducible $\Omega(\j)$-module via $W_t$.

By Proposition \ref{index}
\begin{equation}
\label{menor}
[H(U^\perp):H_t]=[U^\perp/U:\Gamma(\Omega(\j))]\leq |N\cap (1+\j)/N\cap (1+\i)|.
\end{equation}

Consider the $H(U^\perp)$-module $\ind_{H_t}^{H(U^\perp)} Z_t(\phi)$. We infer from (\ref{menor}) and Lemma \ref{dim} that
\begin{equation}
\label{menor2}
\dim(\ind_{H_t}^{H(U^\perp)} Z_t(\phi))=\dim Z_t(\phi)[H(U^\perp):H_t] \leq \dim Z.
\end{equation}

Now $\ind_{H_t}^{H(U^\perp)} Z_t(\phi)$ is a nonzero $H(U^\perp)$-module where the normal subgroup $(R,U)$ acts via $\lambda$
(this is true before inducing, and also after inducing because $f(U,U^\perp)=0$). There is a unique, up to isomorphism, irreducible $H(U^\perp)$-module where $(R,U)$ acts via $\lambda$, namely the Schr${\rm\ddot{o}}$dinger module of type $\lambda$, that is, $Z$.
Thus $\ind_{H_t}^{H(U^\perp)} Z_t(\phi)$ is isomorphic to the direct sum of various copies of $Z$. We deduce from (\ref{menor2}) that
$\ind_{H_t}^{H(U^\perp)} Z_t(\phi)\cong Z$.
But $Z$ is an irreducible $H(U^\perp)$-module, so $Z_t(\phi)$ is an irreducible $H_t$-module and the stabilizer of $Z_t(\phi)$ in $H(U^\perp)$ must be exactly $H_t$. Moreover, since equality prevails in (\ref{menor2}), we can use Lemma \ref{dim} to see that equality prevails in (\ref{menor}) as well.

Finally, since the action of $\Omega(\j)$ extends to $G_t N$, it is clear that $G_t N$ stabilizes $Z_t(\phi)$. Suppose that $g\in\U(V)$
stabilizes $Z_t(\phi)$. As $\Omega(\i)$ acts on $Z_t(\phi)$ via $\alpha_t$, it follows that $g$ stabilizes $\alpha_t$, so $g\in G_t N$ by
Proposition \ref{inertia}. Thus the stabilizer of $Z_t(\phi)$ in $\U(V)$ is $G_t N$. That $Top(\phi)\cong \ind_{G_t N}^{\U(V)} Z_t(\phi)$ was proven
in Theorem \ref{mainabelian}. \qed

\section{Dimension of $Top(\phi)$}\label{prueba3}

Let $\phi\in \N$ and let $t$ be the only element of $T\cap\P$ such that the linear character
$\alpha_t$ of $\Omega(\i)$ enters $Top(\phi)$. Set $\overline{A}=A/\j A$ and $\overline{V}=V/\j V$. Let
$\overline{h}:\overline{V}\times \overline{V}\to \overline{A}$ be the nondegenerate form that $h$ induces on $V$, with
associated unitary group $\U(\overline{V})$. Set $\overline{t}=t+\j V\in \overline{V}$ and let $S_{\overline{t}}$ be the
pointwise stabilizer of $\overline{t}$ in $\U(\overline{V})$. We write $e$ for the nilpotency degree of $\r$. By Theorem \ref{mainabelian}, we have
\begin{equation}
\label{dime0}
\dim Top(\phi)=\dim Z_t(\phi)[\U(V):NG_t].
\end{equation}
Here
$$
\dim Z_t(\phi)=\frac{\sqrt{[\j V:\i V]}}{|N\cap (1+\j)/N\cap (1+\i)|}
$$
by Lemma \ref{dim}. Moreover,
$$
[\U(V):NG_t]=[\U(V):G_t]/[NG_t:G_t],
$$
where
$$
[\U(V):G_t]=[\U(V)/\Omega(\j):\Omega(\j)G_t/\Omega(\j)]=[\U(\overline{V}):S_{\overline t}]$$ and
$$
[NG_t:G_t]=[N:N\cap G_t]=[N:N\cap (1+\j)].
$$
Thus (\ref{dime0}) gives the general formula
\begin{equation}
\label{dime}
\dim Top(\phi)=\sqrt{[\j V:\i V]} [N:N\cap (1+\i)]^{-1} [\U(\overline{V}):S_{\overline{t}}].
\end{equation}
Here $|\k V |=|\k|^m$ for any ideal $\k$ of $A$. Moreover, $|N\cap (1+\i)|=|S\cap\i|$, by Lemma \ref{nis}, and
$$
|N|=\begin{cases} [A^\times:R^\times]\text{ if }*\text{ is unramified},\\
2[A^\times:R^\times]\text{ otherwise},\end{cases}
$$
because the norm map $A^\times\to R^\times$, given by $a\mapsto aa^*$, has image $R^\times$ if $*$ is unramified
and $R^{\times 2}$ otherwise. Furthermore,  $[\U(\overline{V}):S_{\overline{t}}]$ is given in \cite{CHQS} in the unramified and ramified even cases, and in \cite{CS}
in the ramified odd and symplectic cases.

Suppose first $*$ is symplectic, ramified odd, or ramified even with $h$ non isotropic. Then $[\U(\overline{V}):S_{\overline{t}}]$ is shown to be independent  of $t$ in
\cite[Theorem 7.1]{CS} in the symplectic and ramified odd cases,
and in \cite[Proposition 6.2]{CHQS} in the ramified even when $h$ is non isotropic. By (\ref{dime}), $\dim Top(\phi)$ is independent of $\phi$ and simplifies to
\begin{equation}
\label{dime2}
\dim Top(\phi)=\frac{\dim Top}{|\N|}=\frac{\sqrt{|V|}-\sqrt{[\r V:\min V]}}{|\N|}.
\end{equation}
In the symplectic case, $|\N|=2$ and $m=2n$, so (\ref{dime2}) reduces to
$$
\dim Top(\phi)=\frac{q^{ne}-q^{n(e-2)}}{2},
$$
In the ramified odd case, $|\N|=2q^{\ell-1}$, $m=2n$ and $e=2\ell-1$, so (\ref{dime2}) reduces to
$$
\dim Top(\phi)=\frac{q^{n(2\ell-1)}-q^{n(2\ell-3)}}{2q^{\ell-1}}.
$$
In the ramified even when $h$ is non isotropic, $|\N|=2q^{\ell-1}(q-1)$, $m=2$ and $e=2\ell$, so (\ref{dime2}) reduces to
$$
\dim Top(\phi)=\frac{q^{\ell-1}(q+1)}{2}.
$$

Unlike the above three cases, if $*$ is unramified or $*$ is ramified even with $h$ isotropic, then there are two possibilities for  $[\U(\overline{V}):S_{\overline{t}}]$, and hence $\dim Top(\phi)$, and this depends only
on whether $h(t,t)$ is a unit or not.

Indeed, suppose next $*$ is unramified. If $h(t,t)\in\r$, then the index $[\U(\overline{V}):S_{\overline{t}}]$ can easily be derived from
\cite[Corollary 5.6 and Theorem 7.3]{CHQS} by means of \cite[Ch. 11]{G}. This and (\ref{dime}) yield
\begin{equation}
\label{dime3}
\dim Top(\phi)=(q^{(e-1)(m-1)}+(-1)^m q^{(e-2)(m-1)})(q^m+(-1)^{m+1})/(q+1).
\end{equation}
On the other hand, if $h(t,t)\in A^\times$, then $S_{\overline{t}}\cong\U_{m-1}(\overline{A})$, so
$[\U(\overline{V}):S_{\overline{t}}]$ can be computed directly from
\cite[Corollary 5.6]{CHQS} and \cite[Ch. 11]{G}. This and (\ref{dime}) give
\begin{equation}
\label{dime4}
\dim Top(\phi)=q^{(e-1)(m-1)}(q^m+(-1)^{m+1})/(q+1).
\end{equation}
The primitivity of $\lambda$ together with (\ref{formu2}) readily give that $h(t,t)\in\r$ if and only if $\alpha_t|_{N\cap (1+\min)}$ is trivial, that is, if and only if $\phi|_{N\cap (1+\min)}$
is trivial. Thus, (\ref{dime3}) and (\ref{dime4}) are in complete agreement with \cite[Theorem 9.3]{S}.

Suppose finally that $*$ is ramified even with $h$ isotropic. The exact computations of $[\U(\overline{V}):S_{\overline{t}}]$, when $\i=\r^\ell$, and $\dim Top(\phi)$
are explicitly carried out in \cite[\S 8]{CHQS} (whether $h$ is isotropic or not).
We see that indeed $[\U(\overline{V}):S_{\overline{t}}]$ has two values, depending on whether $h(t,t)$ is a unit or not (the latter case is impossible when $h$ is non isotropic). As indicated in \cite[\S 8]{CHQS}, $[\U(\overline{V}):S_{\overline{t}}]$ also depends on whether $\ell$ and~$m$ are even or odd, as well as on the type of form $h$ is chosen to be (of the two available types). But we fixed these parameters at the outset, so $[\U(\overline{V}):S_{\overline{t}}]$ only depends on whether $h(t,t)$ is a unit or not.



\begin{thebibliography}{RBMW}

\bibitem[CHQS]{CHQS} J. Cruiskshank, A. Herman, R. Quinlan, and F. Szechtman, \emph{Unitary groups over local rings},
J. Algebra Appl. 13 (2014) 1350093.

\bibitem[CQS]{CQS} J. Cruiskshank, R. Quinlan, and F. Szechtman, \emph{Hermitian and skew hermitian forms
 over local rings}, Linear Algebra Appl., to appear, arXiv:1705.01562.


\bibitem[CS]{CS} J. Cruiskshank and F. Szechtman, \emph{Unitary groups and ramified extensions}, Commun. Algebra, to appear,
 arXiv:1611.00824.

\bibitem[CS2]{CS2} J. Cruiskshank and F. Szechtman, \emph{Generators and relations for the unitary group of a skew hermitian form over a local ring}, Linear Algebra Appl., to appear, arXiv:1710.11574.


\bibitem[CMS1]{CMS1} G. Cliff, D. McNeilly and F. Szechtman, \emph{Weil representations of symplectic groups over rings},
J. London Math. Soc. (2000) (2) 62 (2000) 423--436.

\bibitem[CMS2]{CMS2} G. Cliff, D. McNeilly and F. Szechtman, \emph{Clifford and Mackey theory for Weil representations of symplectic groups},
J. Algebra 362 (2003) 348-–379.


\bibitem[Ge]{Ge} P. G$\mathrm{\acute{e}}$rardin, \emph{Weil representations associated to finite fields}, J. Algebra 46 (1977) 54–-101.

\bibitem[Go]{Go} R. Gow, \emph{Even unimodular lattices associated with the Weil representation of the finite symplectic group}, J. Algebra 122 (1989) 510–-519.

\bibitem[G]{G} L. Grove, \emph{Classical groups and geometric algebra}, American Mathematical Society, Providence, Rhode Island, 2002.

\bibitem[H]{H} R. E. Howe, \emph{On the characters of Weil's representations}, Trans. Amer. Math. Soc. 177 (1973) 287-–298.

\bibitem[HS]{HS} A. Herman and F. Szechtman, \emph{The Weil representation of a unitary group associated to a ramified quadratic extension of a
finite local ring}, J. Algebra 392 (2013) 158-–184.

\bibitem[HSS]{HSS} A. Herman, M. Shau and F. Szechtman, \emph{Weil representations of unitary groups over ramified extensions of finite local rings with odd nilpotency length}, submitted,
arxiv:1801.10268.


\bibitem[S]{S} F. Szechtman, \emph{A Family of complex irreducible characters possessed by unitary and special unitary groups defined over local rings},
Commun. Algebra 31 (2003)  2381--2401.

\bibitem[T]{T} T. Thomas, \emph{The character of the Weil representation}, J. Lond. Math. Soc. (2) 77 (2008) 221–-239.

\bibitem[TZ]{TZ} P. H. Tiep and A. Zalesski, \emph{Some characterizations of the Weil representations of symplectic and unitary groups}, J. Algebra 192 (1997) 130-–165.

\bibitem[W]{W} A. Weil, \emph{Sur certains groupes d'op$\mathrm{\acute{e}}$rateurs unitaires}, Acta Math. 111 (1964) 143–-211.


\end{thebibliography}
\end{document}